\title[Garside monoids {\it vs} divisibility monoids]
  {Garside monoids {\it vs} divisibility monoids}
\author[Matthieu Picantin]
  {M\ls A\ls T\ls T\ls H\ls I\ls E\ls U\ns
   P\ls I\ls C\ls A\ls N\ls T\ls I\ls N
\thanks{The author thanks Prof. M. Droste and D. Kuske from the Institut f\" ur Algebra, TU
Dresden, Germany, where he wrote this paper. He is grateful to the two anonymous referees for
comments on an earlier draft.}\\
Laboratoire Nicolas Oresme, UMR 6139 CNRS\\
Universit\'e de Caen, F-14000 Caen\\
Email: picantin@math.unicaen.fr}
\date{31 January 2003; revised 12 January 2004}
\newtheorem{prop}{Proposition}[section]
\newtheorem{lemma}[prop]{Lemma}
\newtheorem{cor}[prop]{Corollary}
\newtheorem{thm}[prop]{Theorem}
\newtheorem{exam}[prop]{Example}
\newtheorem{nota}[prop]{Notation}
\newtheorem{defn}[prop]{Definition}
\newtheorem{rem}[prop]{Remark}
\let\D=\Delta
\def\Dar{\hbox{$\downarrow$}}
\def\ie{{\it i.e.}}
\def\resp{resp\hbox{. }}
\def\dR{\mathord{\setminus}}
\def\gL{\mathbin{\scriptstyle\wedge}}
\def\jR{\mathbin{{\scriptstyle\vee}}}
\def\jjR{\mathbin{\bigvee}}
\def\zR#1{{\Delta_{#1}}}
\def\zL#1{\widetilde{\Delta}_{#1}}
\def\D{\Delta}
\def\resp{{\it resp. }}
\def\QZ{Q\!Z}
\def\xx{{x}}
\def\yy{{y}}
\def\zz{{z}}
\def\ss{{s}}
\def\Chiral{\hbox{$M_\chi$}}
\def\Knuth{\hbox{$M_\kappa$}}
\def\Irr{\Sigma}
\def\length#1{\hbox{$|#1|$}}
\begin{document}

\label{firstpage}
\maketitle

\begin{abstract}
Divisibility monoids (\resp Garside monoids) are a natural algebraic generalization of Mazurkiewicz
trace monoids (\resp spherical Artin monoids), namely monoids in which the distributivity of the
underlying lattices (\resp the existence of common multiples) is kept as an hypothesis, but the
relations between the generators are not supposed to necessarily be commutations (\resp be of Coxeter
type). Here, we show that the quasi-center of these monoids can be studied and described similarly,
and then we exhibit the intersection between the two classes of~monoids.\end{abstract}


\section{Introduction}

\noindent The purpose of this paper is to study some possible connections between the classes
of Garside monoids and divisibility monoids, which are natural algebraic generalizations of monoids
involved in two mathematical areas, namely braid theory and trace theory, respectively.

Garside monoids can be used as a powerful tool in the study of automatic structures for groups arising
in topological or geometric contexts. Generalizing Mazurkiewicz trace monoids, divisibility monoids
have been introduced as a mathematical model for the sequential behavior of some concurrent systems
considered in several areas in computer science and in which two sequential transformations
of the form~$ab$ and~$cd$---with~$a,b,c,d$ viewed as atomic transitions---can give rise to the same
effect.

Here we show that Garside monoids and divisibility monoids are both strangely similar (although they
come from apparently unrelated mathematical theories) and genuinely different (even if they share
some essential algebraic properties). Their similarity will be illustrated by the fact that their
quasi-center---roughly speaking, some supmonoid of the center---can be studied with close techniques
and described through a same statement. Their particularity will be emphasized by the fact that the
intersection between the two classes---that we exhibit by using the result about the
quasi-center---is reduced to a somewhat confined subclass.

\smallbreak
\noindent
{\bf Main Theorem.} {\sl (i) A divisibility monoid is a Garside monoid if and only if every pair of
its irreducible elements admits common multiples.

\noindent (ii) A Garside monoid is a divisibility monoid if and only if the lattice of its simple
elements is a hypercube.}

\smallbreak\noindent The paper is organized as follows. In Section~1, we gather the needed basic
properties of~Garside monoids and~divisibility monoids. In Section~2, we introduce a specific tool
that we call {\it local delta} and which allows us to compute a minimal generating set for the
quasi-center of every divisibility monoid (Propositions~\ref{P:Minimal} and~\ref{P:FreeAbelian}). We
compare these results with those obtained for Garside monoids in~\cite{pib}. In Section~3, we
finally prove the main theorem of this paper (Theorem~\ref{T:intersection}) and illustrate~it.

\section{Background from Garside and divisibility monoids}

\noindent In this section, we list some basic properties of
Garside monoids and divisibility monoids, and summarize results by Dehornoy \&
Paris about Garside monoids and by Droste \&~Kuske about divisibility monoids.
For all the results quoted here, we refer the reader
to~\cite{dfx,pic,dgk} and to~\cite{dku,kus}.

\subsection{Divisors and multiples in a monoid}
Assume that~$M$ is a monoid. We say that~$M$ is {\it conical} if~$1$ is the
only invertible element in~$M$. For~$a,b$ in~$M$, we say that~$b$ is a left divisor
of~$a$---or that~$a$ is a right multiple of~$b$---if~$a=bd$ holds for some~$d$
in~$M$. The set of the left divisors of~$b$ is denoted by~$\Dar(b)$. An element~$c$
is a right lower common multiple---or a right lcm---of~$a$ and~$b$ if it is a right
multiple of both~$a$ and~$b$, and every right common multiple of~$a$ and~$b$ is a
right multiple of~$c$. Right divisor, left multiple, and left lcm are defined
symmetrically. For~$a,b$ in~$M$, we say that~$b$ {\it divides}~$a$---or that~$b$ is a
divisor of~$a$---if~$a=cbd$ holds for some~$c,d$ in~$M$. 

\medbreak\noindent If~$c$, $c'$ are two right lcm's of~$a$ and~$b$, necessarily~$c$
is a left divisor of~$c'$, and~$c'$ is a left divisor of~$c$. If we assume~$M$ to be
conical and cancellative, we have~$c=c'$~: the unique right lcm of~$a$
and~$b$ is then denoted by~$a\jR b$. If~$a\jR b$ exists, and~$M$ is left cancellative,
there exists a unique element~$c$ satisfying~$a\jR b=ac$~: this element is denoted
by~$a\dR b$. In particular, we obtain the identities~:

\vglue 2mm

\centerline{\fbox{
$a\jR b=a\cdot(a\dR b)=b\cdot(b\dR a).$ }}
\vglue 3mm

\noindent Cancellativity and conicity imply that left and right divisibility are order relations. Now, the additional assumption that any two elements admitting a common multiple admit an lcm---that we will see is satisfied by both Garside monoids and divisibility monoids---allows to obtain the following algebraic properties for the operation~$\dR$.

\begin{lemma}\label{L:Calculous}
Assume that~$M$ is a cancellative conical monoid in which any two elements admitting a common right multiple admit a right lcm. Then the following identities
hold in~$M$~:
\[(ab)\jR(ac)=a(b\jR c),\hbox{\qquad}c\dR(ab)=(c\dR a)((a\dR c)\dR b),\hbox{\qquad} (ab)\dR
c=b\dR(a\dR c),\]
\[(a\jR b)\dR c=(a\dR b)\dR(a\dR c)=(b\dR a)\dR(b\dR c),
\hbox{\qquad} c\dR(a\jR b)=(c\dR a)\jR(c\dR b).\]
For each identity, this means that both sides exist and are equal or that
neither exists.
\end{lemma}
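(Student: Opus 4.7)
The plan is to prove the five identities in the order (1), then (2) and (3) together, then (4), then (5), each time tracking the matched existence clause ``both sides exist or neither'' by a cancellation argument in one direction and an explicit assembly in the other. I shall use repeatedly that in a cancellative conical monoid mutual divisibility implies equality, and that the defining identities $x \jR y = x \cdot (x \dR y) = y \cdot (y \dR x)$ allow one to translate freely between $\jR$ and $\dR$.

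For (1), if $(ab)\jR(ac)$ exists with, say, $abu=acv$, then left-cancelling $a$ yields $bu=cv$, a common right multiple of $b$ and $c$, whence $b \jR c$ exists; conversely $a(b\jR c)$ is visibly a common right multiple of $ab$ and $ac$. When both sides exist, $a(b \jR c)$ is a common right multiple and left-divides every common right multiple (by cancelling $a$ in $abu=acv=(ab\jR ac)\cdot w$), so conicity forces equality.

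Identities (2) and (3) both stem from the single computation
\[(ab)\jR c \;=\; a\cdot(b\jR(a\dR c)) \;=\; ab\cdot(b\dR(a\dR c)) \;=\; c(c\dR a)\cdot((a\dR c)\dR b),\]
together with the existence criterion that $(ab,c)$ admits a common right multiple iff both $(a,c)$ and $(a\dR c,b)$ do. The forward direction of this criterion is cancellation: writing $(ab)\jR c=(a\jR c)\,r=a(a\dR c)\,r=abp$ and cancelling $a$ gives $(a\dR c)\,r=bp$. The reverse direction is the assembly: set $d=b\jR(a\dR c)$, check that $ad$ is a common right multiple of $ab$ and $c$ via the two expansions above, and then upgrade $ad$ to the right lcm by taking any common right multiple $m=abp=cq$, using that $a\jR c=a(a\dR c)$ divides $m$ to write $m=a(a\dR c)r$, cancelling $a$ to obtain $(a\dR c)r=bp$, deducing $d\mid(a\dR c)r$, and concluding $ad\mid m$. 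Comparing the two factorisations of $(ab)\jR c=ad$ with the defining factorisations $(ab)\jR c=ab\cdot((ab)\dR c)=c\cdot(c\dR(ab))$ yields (3) after cancelling $ab$ and (2) after cancelling $c$.

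Identity (4) is (3) applied twice, using the two symmetric rewritings $a\jR b=a(a\dR b)=b(b\dR a)$: substituting into $(a\jR b)\dR c$ and invoking (3) produces the two claimed expressions, and the existence clause propagates through (3). Identity (5) combines (2) and (4): expand $c\dR(a\jR b)=c\dR(a(a\dR b))$ via (2) to obtain $(c\dR a)\cdot((a\dR c)\dR(a\dR b))$, compare with $(c\dR a)\jR(c\dR b)=(c\dR a)\cdot((c\dR a)\dR(c\dR b))$, and use (4), applied with $b$ and $c$ swapped, to identify both trailing factors with the common value $(a\jR c)\dR b$. The main obstacle is not any single algebraic step, which is routine, but the uniform bookkeeping of existence across nested occurrences of $\jR$ and $\dR$; the cleanest tactic is to funnel everything through the core formula for $(ab)\jR c$, whose ``both or neither'' clause then transports by purely formal substitutions to the remaining four identities.
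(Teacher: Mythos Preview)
The paper does not actually prove this lemma: it is stated in the background section with the blanket reference ``for all the results quoted here, we refer the reader to \cite{dfx,pic,dgk} and to \cite{dku,kus}'', and no argument is given in the text. So there is no in-paper proof to compare against.

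That said, your proposal is correct and is essentially the standard derivation one finds in the cited sources. The organising idea---prove the single core formula $(ab)\jR c = a\bigl(b\jR(a\dR c)\bigr)$ by showing $a\bigl(b\jR(a\dR c)\bigr)$ is a common right multiple of $ab$ and $c$ and left-divides every such, then read off (2) and (3) by cancelling $c$ and $ab$ respectively, and finally obtain (4) and (5) by formal substitution using $a\jR b=a(a\dR b)=b(b\dR a)$---is exactly how these identities are usually established. Your existence bookkeeping is also sound: the biconditional ``$(ab,c)$ have a common right multiple iff $(a,c)$ do and $(a\dR c,b)$ do'' is precisely what is needed, and it propagates cleanly through the substitutions for (4) and (5). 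The only cosmetic remark is that the parenthetical in your treatment of (1) is a bit compressed; spelling out that any common right multiple $abu=acv$ yields, after cancelling $a$, a common right multiple $bu=cv$ of $b$ and $c$, hence is left-divisible by $a(b\jR c)$, would make the minimality step for (1) read more smoothly.
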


\begin{nota} For some subsets~$A,B$ of elements, we denote by~$A\dR B$ the set of elements~$a\dR b$ provided that all of them exist.
\end{nota}

\subsection{The quasi-center of a monoid}
\noindent The quasi-center of a monoid turns out to be a useful supmonoid of its center.

\begin{defn} Let~$M$ be a monoid. An
{\em irreducible element} of~$M$ is defined to be a non trivial element~$a$ such
that~$a=bc$ implies~either~$b=1$ or~$c=1$. The set of the irreducible elements in~$M$ can be
written as~$(M\setminus\{1\})\setminus(M\setminus\{1\})^2$.
\end{defn}

\begin{defn} Assume that~$M$ is a monoid with set of irreducible
elements~$\Irr$. The {\em quasi-center} of~$M$ is defined to be its submonoid~$\{a\in
M~;~a\Irr=\Irr a\}$.
\end{defn}

\noindent Although straightforward, the two following lemmas capture key properties of quasi-central elements. They will be frequently used in the remaining sections.
  
\begin{lemma}\label{L:Indiff} Assume that~$M$ is a cancellative monoid. Then, for every element~$a$
in~$M$ and every quasi-central element~$b$ in~$M$, the following are equivalent~:

\noindent (i) $a$ divides~$b$;

\noindent (ii) $a$ divides~$b$ on the left;

\noindent (iii) $a$ divides~$b$ on the right.
\end{lemma}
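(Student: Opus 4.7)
I would dispatch the easy directions first: $(\text{ii})\Rightarrow(\text{i})$ and $(\text{iii})\Rightarrow(\text{i})$ both follow by taking a trivial factor in the definition of ``$a$ divides $b$''. The substance lies in the converses $(\text{i})\Rightarrow(\text{ii})$ and $(\text{i})\Rightarrow(\text{iii})$, so I would suppose $b = xay$ with $x,y\in M$ and try to eliminate~$y$ or~$x$ respectively.

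The central observation is that the quasi-centrality of~$b$ extends from~$\Sigma$ to the submonoid~$\Sigma^{*}$ generated by the irreducibles. For each $s\in\Sigma$, cancellativity picks out a unique $s^{*}\in\Sigma$ with $bs=s^{*}b$; iterating this along an expression $z=s_{1}\cdots s_{n}\in\Sigma^{*}$ yields $bz=z^{*}b$ for the element $z^{*}=s_{1}^{*}\cdots s_{n}^{*}$, and symmetrically $zb=b\,{}^{*}\!z$ follows from $\Sigma b=b\Sigma$. In the Garside and divisibility settings of the paper, $\Sigma^{*}$ exhausts~$M$, so this promotion applies to any $x,y\in M$; this is the only place where the implicit atomicity of the ambient monoid intervenes.

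Each converse is then a one-line calculation. For $(\text{i})\Rightarrow(\text{iii})$, apply the extension to~$y$ to obtain $by=y'b$; then $xay^{2}=by=y'b=y'xay$, and right-cancelling~$y$ gives $b=y'xa$, so $a$ right-divides~$b$. For $(\text{i})\Rightarrow(\text{ii})$, apply the extension to~$x$ to obtain $xb=bx''$; then $x^{2}ay=xb=bx''=xayx''$, and left-cancelling~$x$ gives $b=ayx''$, so $a$ left-divides~$b$. I expect the only genuine obstacle to be the extension step itself; once~$b$ can be slid past arbitrary elements up to replacement, the cancellation tricks are automatic, which is presumably why the authors label the lemma straightforward.
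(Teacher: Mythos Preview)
Your argument is correct and mirrors the paper's proof: in both cases one uses quasi-centrality to slide~$b$ past one of the flanking factors in $b=xay$ and then cancels. You are in fact more explicit than the paper about why the defining condition $b\Sigma=\Sigma b$ promotes to $bz=z'b$ for arbitrary $z\in M$ (via the atomicity implicit in the Garside and divisibility settings), a step the paper's proof asserts without comment; apart from this unpacking, the two proofs coincide.
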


\begin{proof} By very definition, (ii) (\resp (iii)) implies~(i). Now, assume~(i). Then there exist elements~$c,d$ in~$M$ satisfying~$b=cad$. Since~$b$ is quasi-central, we have~$cb=bc'$ for some~$c'$ in~$M$. We find~$cb=bc'=cadc'$, hence, by left cancellation, $b=adc'$, which implies~(ii). Symmetrically, (i) implies~(iii).\end{proof}

\begin{lemma}\label{L:Straight} Assume that~$M$ is a cancellative monoid and~$a$ is a quasi-central element in~$M$. Then, for any two elements~$b,c$ satisfying~$a=bc$, $b$ is quasi-central if and only if so is~$c$.
\end{lemma}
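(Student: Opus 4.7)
My plan is to chase an irreducible element through the factorization $a=bc$, transporting it across that factorization by the quasi-centrality of $a$ and of whichever of $b$, $c$ is assumed quasi-central, and then stripping off the known factor by cancellation. Write $\Sigma$ for the set of irreducibles of~$M$; the goal is then to verify the two defining inclusions $c\Sigma\subseteq\Sigma c$ and $\Sigma c\subseteq c\Sigma$ (or the symmetric ones for~$b$).

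For the direction ``$b$ quasi-central $\Rightarrow$ $c$ quasi-central'', I would pick $\sigma\in\Sigma$ and apply quasi-centrality of~$a$ to obtain $\tau\in\Sigma$ with $a\sigma=\tau a$, that is, $bc\sigma=\tau bc$. Quasi-centrality of~$b$ then supplies $\tau'\in\Sigma$ with $\tau b=b\tau'$, so $bc\sigma=b\tau' c$, and left cancellation yields $c\sigma=\tau' c\in\Sigma c$. The reverse inclusion $\Sigma c\subseteq c\Sigma$ follows from the same computation read in the opposite direction, using now the inclusions $b\Sigma\subseteq\Sigma b$ and $\Sigma a\subseteq a\Sigma$.

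The converse ``$c$ quasi-central $\Rightarrow$ $b$ quasi-central'' is symmetric but relies on right cancellation. Given $\sigma\in\Sigma$, quasi-centrality of~$c$ provides $\sigma'\in\Sigma$ with $\sigma c=c\sigma'$; then $b\sigma c=bc\sigma'=a\sigma'$, which by quasi-centrality of~$a$ equals $\tau a=\tau bc$ for some $\tau\in\Sigma$, so right-cancelling~$c$ gives $b\sigma=\tau b\in\Sigma b$. The inclusion $\Sigma b\subseteq b\Sigma$ is obtained dually, starting from $\tau\in\Sigma$ and using $\tau a\in a\Sigma$ followed by $c\Sigma\subseteq\Sigma c$.

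The only real subtlety is bookkeeping the irreducibility of the witnesses: one has to invoke the defining equality $a\Sigma=\Sigma a$ (and similarly for the quasi-central factor) in the precise form that produces a witness lying in~$\Sigma$, and one has to use both of its inclusions across the two inclusions to be proved. Once this ordering is arranged, the verification reduces in each case to two applications of quasi-centrality followed by a single cancellation, and no further monoid-theoretic input beyond cancellativity is needed.
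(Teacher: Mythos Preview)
Your argument is correct and follows the same idea as the paper's proof: transport an irreducible across the factorization $a=bc$ using the quasi-centrality of $a$ and of the assumed-quasi-central factor, then cancel. The paper is terser---it writes out only the inclusion $\Sigma b\subseteq b\Sigma$ in the direction ``$c$ quasi-central $\Rightarrow$ $b$ quasi-central'' and dismisses the remaining three verifications as symmetric---whereas you spell out all four, which is arguably cleaner since the lemma is stated for an arbitrary cancellative monoid with no finiteness assumption on~$\Sigma$.
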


\begin{proof} Assume~$c$ to be quasi-central. Let~$x$ be an irreducible element in~$M$. Since~$a$ is quasi-central, there exists an irreducible element~$x'$ in~$M$ satisfying~$xa=ax'$, hence~$xbc=bcx'$. Now, since~$c$ is quasi-central, there exists an irreducible element~$x''$ in~$M$ satisfying~$cx'=x''c$. We find~$xbc=bx''c$, hence, by right cancellation, $xb=bx''$, which implies that~$b$ is quasi-central. Symmetrically, if~$b$ is quasi-central, so is~$c$.\end{proof}

\subsection{Main definitions and properties for Garside monoids}
\begin{defn}\label{D:GarsideMonoid} A monoid~$M$ is said to be {\em
Garside}\footnote[2]{Garside monoids as defined above are called Garside monoids
in~\cite{cmw,dgk,pif,pie}, but they were called either~"small Gaussian" or~"thin
Gaussian" in previous papers~\cite{dfx,pic,pia,pib}, where a more restricted notion
of~Garside monoid was also considered.} if
$M$ is conical and cancellative, every pair of elements in~$M$ admits a left lcm and a
right lcm, and~$M$ admits a {\em Garside element}, defined to be an element whose
left and right divisors coincide, are finite in number and generate~$M$.
\end{defn}

\begin{exam}\label{E:Garside} All spherical Artin monoids are Garside monoids. Braid
monoids of complex reflection groups~\cite{bmr,pic}, Garside's hypercube monoids~\cite{gar,pic}, Birman-Ko-Lee monoids for spherical Artin groups~\cite{bkl,pif} and monoids for torus link groups in~\cite{pie} are also Garside monoids.

The monoid~$\Chiral$ with presentation
\[\langle~x,y,z: xzxy=yzx^2~,~ yzx^2z=zxyzx~,~ zxyzx=xzxyz~\rangle.\]is a typical example of a Garside monoid, which has the distinguishing feature to be not antiautomorphic.

The monoid~$\Knuth$ defined by the presentation~$\langle~\xx,\yy:\xx\yy\xx\yy\xx\yy\xx=\yy\yy~\rangle$ is another example of a Garside monoid, which, as for it, admits no additive norm, \ie, no norm~$\nu$
satisfying~$\nu(ab)=\nu(a)+\nu(b)$ (a {\em norm} for a monoid~$M$ is a mapping~$\mu$ from~$M$ to~$\mathbb{N}$ satisfying~$\mu(a)\!>\!0$ for every~${a\not=1}$ in~$M$, and
satisfying~$\mu(ab)\geq\mu(a)+\mu(b)$ for every~$a,b$ in~$M$).
\end{exam}

\noindent Every element in a Garside monoid has finitely many
left divisors, only then, for any two elements~$a,b$, the left common divisors of~$a$
and~$b$ admit a right lcm, which is therefore the left gcd of~$a$ and~$b$. This
left gcd is denoted by~$a\gL b$.

Every Garside monoid admits a minimal Garside element, denoted in general
by~$\D$. The set of the divisors of~$\D$---called the {\it simple elements}---endowed with the operations~$\jR$ and~$\gL$ is a finite lattice.

\begin{exam}\label{E:Lattice}
The lattices of simple elements of monoids~$\Chiral$  and~$\Knuth$ of~Example~\ref{E:Garside} are displayed in~Figure~\ref{F:CK}, using Hasse diagrams, where clear
(\resp middle, dark) edges represent the irreducible element~$\xx$ (\resp $\yy$, $\zz$). 
\end{exam}

\begin{figure}
\begin{center}
\includegraphics{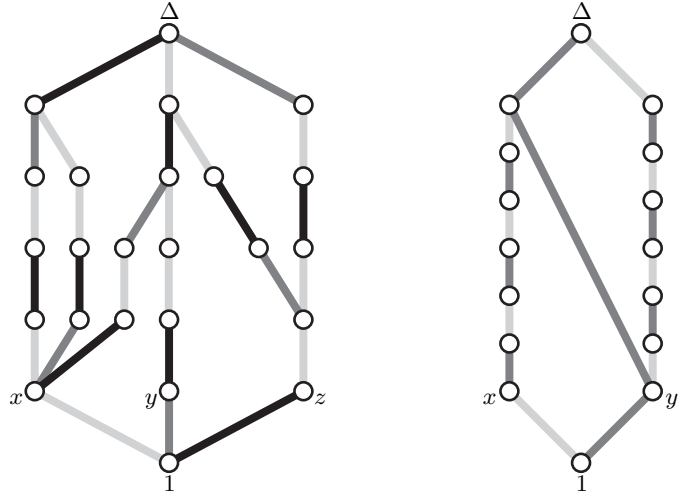}
	\put(-187,-4){\makebox(0,0){$1$}}
	\put(-245,28){\makebox(0,0){$\xx$}}
	\put(-194,27){\makebox(0,0){$\yy$}}
	\put(-130,28){\makebox(0,0){$\zz$}}
	\put(-187,175){\makebox(0,0){$\Delta$}}
	\put(-31,-4){\makebox(0,0){$1$}}
	\put(-66,28){\makebox(0,0){$\xx$}}
	\put(3,27){\makebox(0,0){$\yy$}}
	\put(-31,175){\makebox(0,0){$\Delta$}}
\end{center}
\caption{The lattice of simple elements of the Garside monoids~$\Chiral$ (left) and~$\Knuth$ (right).}
\label{F:CK}
\end{figure}

\noindent It is easy to check that every Garside element is a
quasi-central element. Now, this is far from being sufficient to describe what the
quasi-center of every Garside monoid looks~like. The latter was done in~\cite{pib},
where the following structural result was established.

\begin{thm}\label{T:Pib}
Every Garside monoid is an iterated crossed product of Garside monoids with an
infinite cyclic quasi-center.
\end{thm}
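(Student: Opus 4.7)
The plan is to proceed by induction on the rank~$r$ of the quasi-center~$\QZ(M)$ of a Garside monoid~$M$, regarded as a finitely generated commutative monoid.

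First, I would establish that~$\QZ(M)$ is a free abelian monoid of finite rank. The starting point is the basic property that every element of~$M$ divides some power of the minimal Garside element~$\D$. By Lemma~\ref{L:Straight}, the set of quasi-central left divisors of~$\D$ is closed under the lattice operations on the (finite) lattice of simple elements, so it forms a finite sublattice whose atoms~$\{\d_1,\ldots,\d_r\}$ generate~$\QZ(M)$. A second use of Lemma~\ref{L:Straight} combined with cancellativity shows that the~$\d_i$ commute pairwise (compute~$\d_i\jR\d_j$ in two ways, observing that both~$\d_i\dR\d_j$ and~$\d_j\dR\d_i$ are forced to be quasi-central atoms) and satisfy no further relations, so that every quasi-central element has a unique factorisation of the form~$\d_1^{e_1}\cdots\d_r^{e_r}$.

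If~$r=1$, there is nothing to prove. Otherwise, fix an atom~$\d:=\d_1$. Its quasi-centrality yields a bijection of the irreducible set~$\Irr$ which extends uniquely to a monoid automorphism~$\fR$ of~$M$ satisfying~$\d\, a=\fR(a)\,\d$ for every~$a$ in~$M$. The goal is to realise~$M$ as a crossed product~$N\rtimes_{\fR}\langle\d\rangle$, where~$N=\{a\in M\,;\,\d\hbox{ does not divide }a\}\cup\{1\}$. The main intermediate steps would be: (a)~$N$ is a submonoid of~$M$; (b)~every~$a\in M$ admits a unique decomposition~$a=b\,\d^k$ with~$b\in N$ and~$k\geq0$, where~$k$ is the~$\d$-valuation of~$a$, which is finite thanks to the additive norm on~$M$; (c)~$N$ is itself a Garside monoid, with Garside element~$\D_N$ obtained by writing~$\D=\d^e\D_N$ for the maximal admissible~$e$; (d)~the quasi-center of~$N$ is generated by~$\d_2,\ldots,\d_r$ and so has rank~$r-1$. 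The induction hypothesis applied to~$N$ then concludes.

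The main obstacle is item~(a), namely the closure of~$N$ under multiplication, or equivalently, that~$\d$ is prime in the divisibility order of~$M$: whenever~$\d$ divides~$bc$, it must divide~$b$ or~$c$. Here the atomicity of~$\d$ as an element of the free abelian monoid~$\QZ(M)$ is crucial. By Lemma~\ref{L:Indiff} we may assume that~$\d$ left-divides~$bc$. The identities of Lemma~\ref{L:Calculous}, applied with~$\d$ as a quasi-central pivot, then allow one to propagate the~$\d$-factor through~$b$ and produce a non-trivial quasi-central divisor of either~$b$ or~$c$; freeness of~$\QZ(M)$ forces this divisor to be a multiple of~$\d$. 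Once item~(a) is secured, item~(b) is immediate, and items~(c)--(d) are routine applications of Lemma~\ref{L:Straight} to the unique decomposition~$a=b\,\d^k$.
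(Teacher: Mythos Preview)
First, note that the present paper does not prove Theorem~\ref{T:Pib}: it is quoted as a background result established in~\cite{pib}, so there is no proof here to compare your proposal against directly.

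On the merits of your proposal, there is a genuine gap in the definition of~$N$. You set $N=\{a\in M\,;\,\d\hbox{ does not divide }a\}\cup\{1\}$ and correctly flag closure of~$N$ under multiplication (equivalently, primality of~$\d$ in~$M$) as the main obstacle---but this closure simply fails in general, so the obstacle is fatal rather than merely technical. Take $M=\KB\times\KB$ with $\KB=\langle\xx,\yy:\xx^2=\yy^2\rangle$: the quasi-center has rank~$2$, with atoms $\d_1=\xx_1^2$ and $\d_2=\xx_2^2$, neither of which is irreducible in~$M$. Choosing $\d=\d_1$, the irreducible~$\xx_1$ lies in~$N$ (since $\length{\d}=2>1=\length{\xx_1}$), yet $\xx_1\cdot\xx_1=\d\notin N$. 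Thus your~$N$ is not a submonoid, and no manipulation via Lemma~\ref{L:Calculous} can rescue item~(a). The decomposition in~\cite{pib} does not proceed by excising the multiples of a single atom~$\d_i$; it rather partitions the irreducible set~$\Irr$ according to the equivalence $\xx\sim\yy\Leftrightarrow\zR\xx=\zR\yy$ (Lemma~\ref{L:Minimality} in the present paper is the divisibility-monoid analogue) and builds the factor submonoids from these classes of irreducibles.

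A secondary issue: you justify finiteness of the $\d$-valuation by invoking ``the additive norm on~$M$'', but Example~\ref{E:Garside} exhibits the Garside monoid~$\Knuth$ which admits no additive norm. The finiteness you need does hold, but it follows from the fact that every element of a Garside monoid has only finitely many divisors.
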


\subsection{Main definitions and properties for divisibility monoids}
\begin{defn}\label{D:DiviMonoid} A monoid~$M$ is called a {\em left divisibility monoid} or simply a
{\em divisibility monoid} if~$M$ is cancellative and finitely generated by its irreducible elements, if
any two elements admit a left gcd and if~every element~$a$ dominates a finite\footnote[4]{This
finiteness requirement is in fact not necessary since it follows from the other stipulations (see e.g.~\cite{kus}).}
distributive lattice~$\Dar(a)$.
\end{defn}

\noindent Note that cancellativity and the lattice condition imply conicity. Like in
the Garside case, the left gcd of two elements~$a,b$ will be
denote by~$a\gL b$. The {\it length}~$\length{a}$ of an element~$a$
is defined to be the height of the lattice~$\Dar(a)$.

\begin{exam} Every finitely generated trace monoid is a divisibility monoid. The
monoids $\langle~\xx,\yy,\zz:\xx\yy=\yy\zz~\rangle$, $\langle~\xx,\yy,\zz:\xx^2=\yy\zz~\rangle$
and~$\langle~\xx,\yy,\zz:\xx^2=\yy\zz,~\yy\xx=\zz^2~\rangle$ are not trace but divisibility monoids. The
monoid~$\langle~\xx,\yy,\zz:\xx^2=\yy\zz,~\xx\yy=\zz^2\rangle$ is not a divisibility
monoid (but a Garside monoid!)~; indeed, the lattice~$\Dar(x^3)$ is not distributive.
\end{exam}

\noindent An easy but crucial fact about divisibility monoids is the
following.

\begin{lemma}\label{L:Lcm}Assume that~$M$ is a divisibility monoid. Then finitely many elements in~$M$ admitting at least a right common multiple admit a unique right lcm.
\end{lemma}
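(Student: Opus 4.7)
The plan is to exploit the distributive (hence complete, since finite) lattice structure of $\Dar(m)$ for some chosen common right multiple $m$, and to use the existence of left gcds to compare the candidate lcm with an arbitrary common right multiple that need not lie in $\Dar(m)$.

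First I note that uniqueness is free: if $c$ and $c'$ are both right lcms of $a_1,\dots,a_n$, then each left-divides the other, so $c=c'd$ and $c'=cd'$ yield $c=cd'd$; left cancellativity gives $d'd=1$, and conicity (which, as remarked, follows from cancellativity and the lattice condition) forces $d=d'=1$.

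For existence, let $a_1,\dots,a_n$ admit a common right multiple~$m$. Since each $a_i$ left-divides~$m$, all the $a_i$'s belong to the finite distributive lattice~$\Dar(m)$, so their join $c:=a_1\jR\cdots\jR a_n$ exists in~$\Dar(m)$. The ordering on $\Dar(m)$ is induced by left divisibility in~$M$, hence $c$ is a common right multiple of the~$a_i$'s in~$M$. It remains to check the universal property: let $c'$ be an arbitrary common right multiple of $a_1,\dots,a_n$. Since $M$ is a divisibility monoid, the left gcd $d:=m\gL c'$ exists. By definition of left gcd, every common left divisor of $m$ and $c'$ left-divides~$d$; in particular, each $a_i$---being a common left divisor of $m$ and of $c'$---left-divides~$d$. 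Thus $d$ lies in~$\Dar(m)$ and majorises each $a_i$ therein, so $c\leq d$ in the lattice $\Dar(m)$, i.e.\ $c$ left-divides~$d$. Since $d$ left-divides~$c'$, transitivity yields that $c$ left-divides~$c'$, as required.

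The main---and only real---subtlety is that an arbitrary common right multiple $c'$ need not live in $\Dar(m)$, so the lattice structure of $\Dar(m)$ by itself does not directly compare $c$ with~$c'$. The trick is to pass through $d=m\gL c'$, which does lie in~$\Dar(m)$ and still satisfies $a_i\mid d$ for all~$i$; here one uses precisely the existence of left gcds in~$M$, which is part of the definition of a divisibility monoid. No separate induction on~$n$ is needed: the join in the finite lattice $\Dar(m)$ handles any finite family at once.
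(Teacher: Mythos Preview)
Your argument is correct. The paper does not actually prove this lemma: it is stated as ``an easy but crucial fact about divisibility monoids'' in the background section, with a blanket reference to Droste--Kuske and Kuske for all quoted results. So there is no paper proof to compare against.

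On the substance, your proof is the natural one. Taking the join of the~$a_i$ inside the finite lattice~$\Dar(m)$ gives a candidate, and the only delicate point---that an arbitrary common right multiple~$c'$ need not lie in~$\Dar(m)$---is exactly what you flagged and handled via~$d=m\gL c'$. The existence of~$d$ uses the left-gcd axiom for divisibility monoids, and the fact that each~$a_i$ left-divides~$d$ follows from the universal property of the gcd. Your uniqueness paragraph is also fine; conicity is indeed available (the paper notes it follows from cancellativity together with the lattice condition). Nothing is missing.
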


\noindent The following result states that there exists a decidable class of
presentations that gives rise precisely to all divisibility monoids.

\begin{thm}\label{T:Kuske}
Assume that~$M$ is a monoid finitely generated by the set~$\Irr$ of its irreducible
elements. Then~$M$ is a left divisibility monoid if and only if 

\noindent(i) $\Dar(xyz)$ is a distributive lattice, 

\noindent(ii) $xyz=xy'z'$ or~$yzx=y'z'x$ implies~$yz=y'z'$, 

\noindent(iii) $xy=x'y'$, $xz=x'z'$ and~$y\not=z$ imply~$x=x'$,

\noindent for any~$x,y,z,x',y',z'$ in~$\Irr$, and if

\noindent(iv) we have~$M\cong\Irr^*\!/\!\!\sim$, where~$\sim$ is the congruence
on~$\Irr^*$ generated by the pairs~$(xy,zt)$ with~$x,y,z,t$ in~$\Irr$ satisfying~$xy=zt$.
\end{thm}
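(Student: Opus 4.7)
The plan is to prove the two implications separately. For the forward direction, suppose $M$ is a divisibility monoid. Condition~(i) is a direct instance of Definition~\ref{D:DiviMonoid}. Condition~(ii) is (left or right) cancellativity. For (iii), set $u:=xy=x'y'$ and $v:=xz=x'z'$; cancellation gives $u\neq v$ (since $y\neq z$), and in the distributive lattice $\Dar(u)$ of height~$2$ any two distinct atoms join to the top, so $x\neq x'$ would force $x\jR x'=u$ and, symmetrically, $x\jR x'=v$, contradicting $u\neq v$. For (iv), each factorization of $a\in M$ into irreducibles is a maximal chain in $\Dar(a)$; the Jordan--H\"older property for distributive lattices, combined with Lemma~\ref{L:Lcm}, then shows that any two such chains are related by a finite sequence of elementary length-$2$ transpositions, as required.

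For the backward direction, assume (i)--(iv). The relations in~(iv) are length-preserving, so $M$ carries a well-defined length function, and finite generation forces each $\Dar(a)$ to be finite. Cancellativity on both sides follows from~(ii) by induction on length. Once each $\Dar(a)$ is known to be a lattice, its distributivity reduces, via a classical local-to-global argument, to the distributivity of the lattices $\Dar(d)$ with $|d|\leq 3$: for $|d|=3$ this is exactly~(i), and for $|d|\leq 2$ the lattice $\Dar(d)$ sits as an interval of some $\Dar(dx)$ with $|dx|=3$ and inherits distributivity.

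The main obstacle is to show, prior to invoking such lattice-theoretic results, that $\Dar(a)$ is actually a lattice, \ie{} that any two left divisors of $a$ admit a right lcm lying below $a$. I would proceed by induction on $|a|$, working with $\Irr^*$-representatives and using~(iv) to reduce the construction of joins to successive length-$2$ moves along a chosen word for $a$. Condition~(iii) ensures that when two length-$2$ windows of a word can be reshuffled in competing ways the outcomes coincide, preventing branching, while condition~(ii) guarantees that a local swap propagates past adjacent letters without collision. This confluence argument, gluing local data from the lattices $\Dar(d)$ with $|d|\leq 3$ into a global lattice structure on $\Dar(a)$, is exactly where (ii) and (iii) do their work. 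Once each $\Dar(a)$ is a distributive lattice, the existence of left gcds for arbitrary pairs follows by intersecting divisor sets, completing the verification that $M$ is a divisibility monoid.
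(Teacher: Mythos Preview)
The paper does not prove this theorem at all: it is quoted as a background result from Kuske's work (the paper refers the reader to \cite{dku,kus} for everything in that subsection), so there is no in-paper proof to compare your attempt against. What follows is therefore an assessment of your sketch on its own merits.

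Your forward direction is sound. Conditions~(i) and~(ii) are immediate specializations of the defining axioms, your argument for~(iii) via $x\jR x'=u=v$ is correct (distributivity is not even needed there---modularity of~$\Dar(u)$ and~$\Dar(v)$ suffices to force $\length{x\jR x'}=2$), and for~(iv) the identification of factorizations with maximal chains in~$\Dar(a)$ together with the fact that in a finite distributive lattice every height-$2$ interval is a chain or a square is exactly the right mechanism.

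The backward direction, however, has a genuine gap. You assert that cancellativity ``follows from~(ii) by induction on length'', but~(ii) only gives cancellation of a single irreducible from a product of \emph{three} irreducibles. Already the length-$2$ case---deducing $y=y'$ from $xy=xy'$ with $x,y,y'\in\Irr$---does not follow from~(ii) alone, and for longer words the obstruction is that a chain of $\sim$-moves connecting $xw$ to $xw'$ may repeatedly involve the first letter, so there is no evident induction. In Kuske's treatment, cancellativity is not proved separately by a short induction; it is obtained together with the lattice structure of~$\Dar(a)$ via a confluence\slash normal-form analysis in which~(ii) and~(iii) control the local overlaps. You correctly identify this confluence step as ``the main obstacle'' for the lattice property, but you should recognise that cancellativity is part of the same obstacle, not something that can be dispatched beforehand. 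Your outline of the confluence argument (gluing local data from the $\Dar(d)$ with $\length{d}\le 3$) is the right shape, but as written it remains a plan rather than a proof.
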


\section{The quasi-center of a divisibility monoid}

\noindent This section deals only with divisibility monoids, and no knowledge of
Garside monoids is needed---except possibly for the final remark. After defining the
{\it local delta}, we use it as a fundamental tool in order to give a generating
set for the quasi-center of every divisibility monoid and show that it is minimal. We
establish then that the quasi-center of every divisibility monoid is a free Abelian
submonoid.

\subsection{A generating set for the quasi-center}
We first introduce a partial version of what the author called {\it local
delta} in~\cite{pib}.

\begin{defn}
Assume that~$M$ is a divisibility monoid. Let~$a$ be an element in~$M$. If the
set~$\{b\dR a~;~b\in M\}$ is well-defined\footnote{that is, the element~$b\dR a$ exists---or equivalently, the element~$a\jR b$ exists---for every~$b$ in~$M$.} and admits a right lcm, the latter is
called the {\em right local delta of~$a$} or simply  the {\em local delta of~$a$} and is denoted by
\vglue 2mm
\centerline{\fbox{
$\zR{a}=\jjR\{b\dR a~;~b\in M\}.$
}}
\vglue 2mm

\noindent Otherwise, we say that~$a$ does not admit a right local delta. Note that the equality~$1\dR
a=a$ implies~$a$ to be a left divisor of~$\zR{a}$, whenever it exists, and that, having~$b\dR 1=1$ for every~$b$ in~$M$, we
obtain~$\zR{1}=1$.
\end{defn}

\begin{lemma}\label{L:Key} Assume that~$M$ is a divisibility monoid. Then an element~$a$ in~$M$ admits a local delta if and only if $a\jR b$ exists for every element~$b$ in~$M$.
\end{lemma}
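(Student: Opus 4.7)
The forward direction is immediate from the definition of the local delta: the existence of~$\zR{a}$ requires the set~$\{b\dR a~;~b\in M\}$ to be well-defined, \ie, every $b\dR a$ to exist, and by the identity $b(b\dR a)=a\jR b$ this is equivalent to $a\jR b$ existing for every $b\in M$. For the converse, let us call an element~$c$ \emph{universally joinable} when $c\jR b$ exists for every~$b\in M$; the task is to show that, assuming~$a$ is universally joinable, the well-defined set $A=\{b\dR a~;~b\in M\}$ admits a right lcm. The plan is to apply Lemma~\ref{L:Lcm} after establishing two facts: $A$ is finite, and $A$ admits a common right multiple.

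Finiteness of~$A$ stems from the distributivity of~$\Dar(a\jR b)$ combined with the additivity of length in a divisibility monoid. The rank formula in the finite distributive lattice~$\Dar(a\jR b)$ reads $\length{a}+\length{b}=\length{a\jR b}+\length{a\gL b}$, so the identity $b(b\dR a)=a\jR b$ combined with length additivity gives $\length{b\dR a}=\length{a\jR b}-\length{b}=\length{a}-\length{a\gL b}\leq\length{a}$. Since~$M$ is finitely generated by the finite set~$\Irr$, only finitely many elements of~$M$ have length at most~$\length{a}$, whence~$A$ is finite.

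To obtain a common right multiple for~$A$, I would show that every element of~$A$ is itself universally joinable. First, if $c_1,c_2$ are universally joinable, so is $c_1\jR c_2$: for any $b\in M$, the element $c_1\jR(c_2\jR b)$ exists by iterated universal joinability and is a common right multiple of $c_1\jR c_2$ and~$b$, so $(c_1\jR c_2)\jR b$ exists by Lemma~\ref{L:Lcm}. Applied iteratively over the finite set~$A$, this yields the sought common right multiple, and Lemma~\ref{L:Lcm} then produces $\zR{a}=\jjR A$. It remains to show that each $b\dR a$ is universally joinable. The crux is the bootstrap for $x\in\Irr$: fix $b_0\in M$; by hypothesis, $a\jR(xb_0)$ exists, and being a common right multiple of $a$ and~$x$, it must be a right multiple of $a\jR x=x(x\dR a)$; writing $a\jR(xb_0)=x(x\dR a)v=(xb_0)w$ and left-cancelling~$x$ yields an element $(x\dR a)v=b_0w$ which is a common right multiple of $x\dR a$ and $b_0$, whence $(x\dR a)\jR b_0$ exists. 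Reading the same argument with $a$ replaced by any universally joinable element, an induction on~$\length{b}$ based on the identity $(xb)\dR c=b\dR(x\dR c)$ from Lemma~\ref{L:Calculous} propagates universal joinability from~$a$ to every $b\dR a$.

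The main obstacle is precisely this bootstrap, the passage from universal joinability of~$a$ to universal joinability of $x\dR a$ for $x\in\Irr$; it uses both left cancellativity and the lattice hypothesis in an essential way. Once it is in hand, the length bound making~$A$ finite and the induction extending the property to arbitrary $b\dR a$ are routine bookkeeping.
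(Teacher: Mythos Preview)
Your argument is correct and follows the same overall architecture as the paper's: establish the length bound $\length{b\dR a}\le\length{a}$ from distributivity, deduce that $A=\{b\dR a~;~b\in M\}$ is finite, then show that any two elements of~$A$ admit a common right multiple and invoke Lemma~\ref{L:Lcm}. The difference lies entirely in that last step. You prove the stronger intermediate claim that each $b\dR a$ is itself universally joinable, via a bootstrap through irreducibles and an induction on~$\length{b}$ using $(xb)\dR c=b\dR(x\dR c)$. The paper instead dispatches the pairwise-join question in one stroke: given $c,d\in M\dR a$ with $d=h\dR a$, the hypothesis guarantees $(hc)\dR a$ exists, and Lemma~\ref{L:Calculous} gives $c\dR d=c\dR(h\dR a)=(hc)\dR a$, so $c\jR d$ exists directly. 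This avoids your bootstrap and induction entirely. On the other hand, your route yields the extra information that every $b\dR a$ is universally joinable, and your explicit count ``only finitely many elements of length at most $\length{a}$'' makes the finiteness of~$A$ transparent without the auxiliary sets~$\Upsilon_i(a)$ that the paper introduces (though the paper's~$\Upsilon_i$ construction is reused later for the effective computation in Remark~\ref{R:Upsilon}).
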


\begin{proof} Assume that~$M$ is a divisibility monoid with~$\Irr$ the set of its irreducible elements and~$\Irr_1=\Irr\cup\{1\}$. For every~$a$ in~$M$, we define~$\Upsilon_i(a)$ by~$\Upsilon_0(a)=\{a\}$ and~$\Upsilon_i(a)=\Irr_1\dR\Upsilon_{i-1}(a)$ for~$i>0$ whenever both~$\Upsilon_{i-1}(a)$ exists and~$c\dR b$ exists for every~$c$ in~$\Irr_1$ and every~$b$ in~$\Upsilon_{i-1}(a)$.

Assume that~$a$ is an element such that~$a\jR b$ exists for every~$b$ in~$M$. From the distributivity of~$\Dar(a\jR b)$, we can deduce~$\length{b\dR
a}\leq\length{a}$. The set~$\{b\dR a~;~b\in M\}$ is then finite, and 
there exists a positive integer~$i_a$ satisfying
\[\{a\}=\Upsilon_0(a)\subsetneq\Upsilon_1(a)\subsetneq\ldots\subsetneq\Upsilon_{i_a}(a)
=\Upsilon_{i_a+1}(a)=\ldots=\{b\dR a~;~b\in M\}.\]
Indeed, by Lemma~\ref{L:Calculous}, we have~$M\dR a=\Irr_1^{i_a}\dR a=\Irr_1\dR(\Irr_1^{i_a-1}\dR a)$. Now, assume~$c,d$ belonging to~$M\dR a$ and~$h$ satisfying~$d=h\dR a$. By hypothesis, the element~$(hc)\dR a$ exists, hence, by Lemma~\ref{L:Calculous}, the element~$c\dR d$---which is~$c\dR (h\dR a)$---exists, so does~$c\jR d$. Therefore, the finite set~$\{b\dR a~;~b\in M\}$ admits a right lcm, namely~$\zR{a}$.

The converse implication is straightforward.\end{proof}

\begin{rem}\label{R:Upsilon}Following the previous proof, we can compute the right local delta of any element~$a$ by recursively computing the sets~$\Upsilon_i(a)$ whenever they exist and then by computing the right lcm of~$\Upsilon_{i_a}(a)$, which must exist by Lemma~\ref{L:Key}. See Examples~\ref{E:Upsilon} and~\ref{E:RankThree} below.
\end{rem}

\noindent We are going to prove that the quasi-center of any divisibility monoid is
generated by the (possibly empty) set of the local delta of its irreducible elements
(Proposition~\ref{P:QuasiOfDiv}). The proof of this result relies on several
preliminary statements.

\begin{lemma}\label{L:zRdivides}
Assume that~$M$ is a divisibility monoid. Then, for every element~$a$ and every
quasi-central element~$b$ in~$M$,  $a$ dividing~$b$ implies~$\zR{a}$
existing and dividing~$b$.
\end{lemma}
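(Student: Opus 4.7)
The plan is to reduce the statement to two verifications: first that $\zR a$ exists, by checking the hypothesis of Lemma~\ref{L:Key}; and second that $\zR a$ divides $b$ on the left, which by Lemma~\ref{L:Indiff} is enough. The key observation in both steps is that, since $b$ is quasi-central and $M$ is generated by~$\Irr$, a straightforward induction on the length of a factorization in irreducibles yields $bM=Mb$: for every $c\in M$ there is some $c'\in M$ with $cb=bc'$. I would state this as a preliminary remark (or, if preferred, fold it into the argument).

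Next I would handle the existence of $\zR a$. Since $a$ divides the quasi-central element~$b$, Lemma~\ref{L:Indiff} gives $b=ab_1$ for some $b_1\in M$. For an arbitrary $c\in M$, picking $c'$ with $cb=bc'$ yields
\[cb=bc'=ab_1c',\]
so $cb$ is simultaneously a right multiple of~$c$ and of~$a$, hence a common right multiple of $a$ and~$c$. By Lemma~\ref{L:Lcm} the right lcm $a\jR c$ exists. As this holds for every $c\in M$, Lemma~\ref{L:Key} furnishes $\zR a$.

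For the divisibility, I would argue element by element of the lcm. Fix $c\in M$. Since $cb$ is a common right multiple of $a$ and~$c$, it is a right multiple of $a\jR c=c\,(c\dR a)$, so $cb=c(c\dR a)w$ for some $w\in M$. Left cancelling~$c$ gives $b=(c\dR a)w$, showing that $c\dR a$ is a left divisor of~$b$. Therefore $b$ is a right common multiple of the whole family $\{c\dR a\,;\,c\in M\}$, hence a right multiple of its right lcm~$\zR a$; in particular $\zR a$ divides~$b$.

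The only mild obstacle is the preliminary fact $bM=Mb$ for quasi-central~$b$, but it is immediate by induction in $\Irr^*$ using the defining property $b\Irr=\Irr b$ together with cancellativity; everything else is a direct application of the machinery of Section~1, especially Lemmas~\ref{L:Calculous},~\ref{L:Indiff},~\ref{L:Lcm}~and~\ref{L:Key}.
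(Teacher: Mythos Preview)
Your argument is correct and is essentially the same as the paper's proof: both use Lemma~\ref{L:Indiff} to write $b=ab_1$, then the quasi-centrality of~$b$ to get $cb=bc'=ab_1c'$ as a common right multiple of~$a$ and~$c$, invoke Lemma~\ref{L:Lcm} and left-cancel to see that each $c\dR a$ left-divides~$b$, and conclude via Lemma~\ref{L:Key}. The only difference is cosmetic: you spell out the preliminary fact $bM=Mb$ and the passage from ``every $c\dR a$ left-divides~$b$'' to ``$\zR a$ left-divides~$b$'', both of which the paper uses tacitly.
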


\begin{proof} By hypothesis and Lemma~\ref{L:Indiff}, there exists an element~$d$ in~$M$ satisfying~$b=ad$.
As~$b$ is quasi-central, for every~$c$ in~$M$, there exists an element~$c'$ in~$M$
satisfying~$cb=adc'$. By Lemma~\ref{L:Lcm}, for every~$c$ in~$M$, $c\jR a$---which
exists and is~$c(c\dR a)$---divides~$cb$ on the left, and, by left cancellation,
$c\dR a$ exists and divides~$b$ on the left. Therefore, $\zR a$ exists (by Lemma~\ref{L:Key}) and divides~$b$ on the left.\end{proof}

\noindent A weaker but convenient version of Lemma~\ref{L:zRdivides} is the following~:

\begin{lemma}\label{L:zRdivides2}
Assume that~$M$ is a divisibility monoid. Then every quasi-central element~$a$ in~$M$
satisfies~$\zR{a}=a$.
\end{lemma}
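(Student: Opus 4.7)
The plan is to apply the preceding Lemma~\ref{L:zRdivides} with the roles of $a$ and $b$ both played by the given quasi-central element, and then to use the defining property of the local delta together with conicity and cancellativity to pin down the equality.

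First, I would observe that $a$ trivially divides $a$ (since $a = 1 \cdot a \cdot 1$), so Lemma~\ref{L:zRdivides} is applicable with $b=a$. This immediately yields that $\zR{a}$ exists and is a left divisor of $a$. Thus we obtain a factorization $a = \zR{a}\cdot c$ for some $c$ in $M$.

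Next, I would invoke the definition of the local delta in the other direction: as noted right after the definition, the identity $1\dR a = a$ forces $a$ to be a left divisor of $\zR{a}$ whenever the latter exists. So we also have $\zR{a} = a\cdot d$ for some $d$ in $M$. Substituting gives $a = a\cdot d\cdot c$; left cancellation in~$M$ reduces this to $1 = dc$, and conicity (which holds in every divisibility monoid, as noted after Definition~\ref{D:DiviMonoid}) forces $d = c = 1$, hence $\zR{a} = a$.

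There is no real obstacle here: once Lemma~\ref{L:zRdivides} is in hand, the argument is a one-line application plus a standard left-divisibility/conicity squeeze. The only point that deserves a moment's care is checking that the definition of $\zR{a}$ indeed guarantees $a \mid_L \zR{a}$, but this is precisely the remark made in the definition itself ($1\dR a = a$ puts $a$ into the family over which the right lcm is taken).
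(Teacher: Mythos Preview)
Your argument is correct and is exactly the intended one: the paper presents Lemma~\ref{L:zRdivides2} as an immediate weakening of Lemma~\ref{L:zRdivides} without giving a separate proof, and your application of that lemma with $b=a$, followed by the squeeze between $a\mid_L\zR{a}$ (from $1\dR a=a$) and $\zR{a}\mid_L a$ via cancellativity and conicity, is precisely how the corollary unpacks.
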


\noindent The converse assertion is as follows~:

\begin{lemma}\label{L:zRminQuasi} Assume that~$M$ is a divisibility monoid. Then every
element~$a$ in~$M$ satisfying~$\zR{a}=a$ is quasi-central.
\end{lemma}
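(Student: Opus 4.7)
The plan is to exploit the symmetry $a\jR b = a(a\dR b) = b(b\dR a)$ together with the hypothesis to extract, for every irreducible $x$, a factorization $xa = ax'$ in which $x'$ is again irreducible, and then invoke finiteness of $\Irr$ to upgrade an injection into a bijection.

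First, I would unpack what the hypothesis $\zR{a}=a$ really says. Since $\zR{a}$ is the right lcm of the family $\{b\dR a : b\in M\}$, its very existence forces (via Lemma~\ref{L:Key}) that $a\jR b$ exists for every $b\in M$; and the equality $\zR{a}=a$ forces every $b\dR a$ to be a left divisor of $a$. Thus for each $b\in M$ one has $a=(b\dR a)\,c_b$ for some $c_b\in M$, and the identity $b(b\dR a)=a\jR b=a(a\dR b)$ then rewrites $ba=b(b\dR a)\,c_b = a\cdot (a\dR b)\,c_b$. In other words, \emph{every} element $b\in M$ satisfies $ba=a\,\psi(b)$ for some $\psi(b)\in M$.

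Next, I would specialize to $b=x\in\Irr$ and set $x'=(a\dR x)\,c_x$, so that $xa=ax'$. The delicate point is proving that $x'$ is irreducible. For this I would invoke length additivity in a divisibility monoid, which follows from Theorem~\ref{T:Kuske}(iv): since $\sim$ is generated by pairs $(yz,y'z')$ with all four letters irreducible, word length is preserved by the congruence and coincides with the height $|\cdot|$ of $\Dar(\cdot)$; hence $|uv|=|u|+|v|$ for all $u,v\in M$. Applied here: $|xa|=1+|a|=|a|+|x'|$, so $|x'|=1$, which forces $x'\in\Irr$ (as $x'\neq 1$ by cancellation from $x\neq 1$).

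This produces a map $\phi\colon\Irr\to\Irr$, $\phi(x)=x'$, characterized by $xa=a\,\phi(x)$. Left cancellation in $M$ makes $\phi$ injective. Because a divisibility monoid is, by Definition~\ref{D:DiviMonoid}, finitely generated by its irreducibles, $\Irr$ is finite, so $\phi$ is a bijection. Consequently
\[
\Irr\,a = \{a\,\phi(x):x\in\Irr\} = \{a\,y:y\in\Irr\} = a\,\Irr,
\]
which is exactly the definition of $a$ being quasi-central. The step I expect to need the most care is justifying length additivity cleanly from the axioms in the paper (it is standard but is used implicitly); everything else is a direct unwinding of the lcm calculus of Lemma~\ref{L:Calculous} together with the hypothesis $\zR{a}=a$.
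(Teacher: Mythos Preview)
Your proof is correct and follows essentially the same route as the paper: from $\zR{a}=a$ you deduce that $x\dR a$ left-divides $a$, use $x(x\dR a)=a(a\dR x)$ to get $xa=ax'$, then apply length additivity to force $x'\in\Irr$, and finally turn the resulting injection $\Irr\to\Irr$ into a bijection via finiteness. The only cosmetic difference is that you justify length additivity explicitly through Theorem~\ref{T:Kuske}(iv), whereas the paper uses it tacitly.
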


\begin{proof} Let~$\xx$ be an irreducible element of~$M$. From~$\jjR(M\dR a)=a$, we
deduce that~$\xx\dR a$ is a left divisor of~$a$. Therefore, $\xx(\xx\dR a)$ is a left
divisor of~$\xx a$. Now, by definition, $\xx(\xx\dR a)$ is~$a(a\dR\xx)$, so there
exists~$d$ in~$M$ satisfying~$\xx a=ad$. We obtain~$\length{\xx a}=\length{ad}$,
hence~$\length{\xx}+\length{a}=\length{a}+\length{d}$. We deduce~$\length{d}=\length{\xx}=1$, thus~$d$ is an irreducible element of~$M$. So,
there exists a mapping~$f_a$ from the irreducible elements of~$M$ into themselves
such that~$\xx a=af_a(\xx)$ holds for every irreducible element~$\xx$. By
cancellativity, $f_a$ is injective, hence surjective~: $a$ is quasi-central by
definition.\end{proof}

\begin{prop}\label{P:Quasi} Assume that~$M$ is a divisibility monoid. Then, for
every~$a$ in~$M$, the element~$\zR{a}$ is quasi-central, whenever it exists.
\end{prop}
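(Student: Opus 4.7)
\smallbreak\noindent\textbf{Proof proposal.} Write $Z=\zR{a}$. By Lemma~\ref{L:zRminQuasi}, it is enough to establish the equality $\zR{Z}=Z$. The divisibility of $\zR{Z}$ by $Z$ on the left is immediate once $\zR{Z}$ is known to exist, since $1\dR Z=Z$ puts $Z$ itself into the family $\{c\dR Z~;~c\in M\}$. So the real content is to produce the reverse divisibility, and to justify the existence of $\zR{Z}$ in the first place.

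The plan is to prove, in one stroke, the stronger statement that for every $c$ in $M$ the element $c\dR Z$ exists and is a left divisor of $Z$. The computation will be a double application of Lemma~\ref{L:Calculous}. Starting from $Z=\jjR\{e\dR a~;~e\in M\}$, I would distribute $c\dR(\cdot)$ across the right lcm via the identity $c\dR(u\jR v)=(c\dR u)\jR(c\dR v)$ and then rewrite $c\dR(e\dR a)=(ec)\dR a$ through $(uv)\dR w=v\dR(u\dR w)$, arriving at
\[c\dR Z=\jjR\{(ec)\dR a~;~e\in M\}.\]
Each term on the right exists because $\zR{a}$ does (Lemma~\ref{L:Key}), and lies in $M\dR a$, hence is a left divisor of $Z$. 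The right-hand side is therefore the right lcm of a finite subset of $M\dR a$ admitting $Z$ as a common right multiple; its existence is granted by Lemma~\ref{L:Lcm}, and through the existence equivalences built into Lemma~\ref{L:Calculous} one concludes that $c\dR Z$ exists as well and is a left divisor of $Z$.

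With this step in hand the conclusion is straightforward. Since $c\jR Z$ exists for every $c$ in $M$, Lemma~\ref{L:Key} yields the existence of $\zR{Z}$. Since each $c\dR Z$ left-divides $Z$, the element $Z$ is a common right multiple of the family $\{c\dR Z~;~c\in M\}$, so $\zR{Z}$ left-divides $Z$; combined with the reverse divisibility already noted, this gives $\zR{Z}=Z$, and Lemma~\ref{L:zRminQuasi} then delivers the quasi-centrality of $\zR{a}$.

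The main obstacle I anticipate is bookkeeping rather than any new idea: every rewrite through Lemma~\ref{L:Calculous} carries an existence clause, and one must take care that each existence is either granted directly by the standing hypothesis that $\zR{a}$ exists, or salvaged from the pairwise-lcm closure of $M\dR a$ that is implicit in the proof of Lemma~\ref{L:Key}. The two substitutions $c\dR(u\jR v)=(c\dR u)\jR(c\dR v)$ and $c\dR(e\dR a)=(ec)\dR a$ are, however, exactly what is needed to fold an arbitrary $c\dR Z$ back into the generating family of $Z$ itself, which is what forces the fixed-point property.
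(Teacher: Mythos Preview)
Your proposal is correct and follows essentially the same route as the paper's proof: both establish $\zR{\zR{a}}=\zR{a}$ by distributing $c\dR(\cdot)$ across the join defining $\zR{a}$ via the identities of Lemma~\ref{L:Calculous}, recognizing the result as a join of elements of $M\dR a$ and hence a left divisor of $\zR{a}$, and then invoking Lemma~\ref{L:Key} and Lemma~\ref{L:zRminQuasi}. The only difference is presentational: the paper first passes explicitly to a finite set $T=\{c_1,\ldots,c_r\}$ with $M\dR a=T\dR a$ before applying the (binary) identities of Lemma~\ref{L:Calculous}, which cleanly handles the existence bookkeeping you anticipate in your final paragraph.
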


\begin{proof}Let~$a$ be an element in~$M$ such that~$\zR{a}$ exists. We claim
that~$\zR{\zR{a}}$ exists and is~$\zR{a}$. By hypothesis, the set~$\{b\dR a~;~b\in
M\}$ is well-defined and admits a right lcm, namely~$\zR{a}$. As~$\Dar(\zR{a})$ is
finite (by definition of a divisibility monoid), so is the set~$\{b\dR a~;~b\in M\}$.
Therefore, $M$ admits a finite subset~$T$ satisfying~$M\dR a=T\dR a$.
Let~$T=\{c_1,\ldots,c_r\}$. For every element~$b$ in~$M$, the element~$((c_1b)\dR a)\jR\cdots\jR((c_rb)\dR
a)$ exists and divides~$\zR{a}$ on the left. By using Lemma~\ref{L:Calculous}, we find
\[
((c_1b)\dR a)\jR\cdots\jR((c_rb)\dR a)=b\dR((c_1\dR a)\jR\cdots\jR(c_r\dR a))=b\dR\zR{a},
\]which implies that~$b\dR\zR{a}$ exists and divides~$\zR{a}$ (on the left) for
every~$b$ in~$M$. We deduce that~$\zR{\zR{a}}$ exists (by Lemma~\ref{L:Key}) and divides~$\zR{a}$. Now, $\zR{a}$ dividing~$\zR{\zR{a}}$,
cancellativity and conicity imply~$\zR{\zR{a}}=\zR{a}$. Therefore, by
Lemma~\ref{L:zRminQuasi},
$\zR{a}$ is quasi-central.\end{proof}

\begin{cor}\label{C:Quasi} Assume that~$M$ is a divisibility monoid. Then the partial application~$a\mapsto\zR{a}$ is a surjection from~$M$ onto~the quasi-center of~$M$.
\end{cor}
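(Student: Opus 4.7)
The plan is to observe that this corollary is essentially a repackaging of two results already established, namely Proposition~\ref{P:Quasi} and Lemma~\ref{L:zRdivides2}, so the proof reduces to checking well-definedness of the image, inclusion of the image in the quasi-center, and surjectivity onto it.

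First I would note that $a\mapsto\zR{a}$ is genuinely a partial map on $M$: by Lemma~\ref{L:Key}, it is defined precisely on those $a$ for which $a\jR b$ exists for every $b\in M$. For any $a$ in this domain, Proposition~\ref{P:Quasi} tells us that $\zR{a}$ is quasi-central in $M$, so the image is indeed contained in the quasi-center. This takes care of the fact that the partial application lands where the statement claims it does.

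Next, for surjectivity, I would pick any quasi-central element $a\in M$ and show that $a$ itself lies in the domain of the partial map, with $\zR{a}=a$. This is exactly the content of Lemma~\ref{L:zRdivides2}: every quasi-central element~$a$ satisfies $\zR{a}=a$, which in particular asserts that $\zR{a}$ exists. Hence every quasi-central element is its own image under the partial map, proving surjectivity.

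There is no real obstacle here; the only thing to be a bit careful about is the word \emph{partial}, which must be interpreted consistently with Lemma~\ref{L:Key} (the domain is exactly the set of elements admitting a right lcm with every element of $M$), and the observation that the quasi-center is automatically contained in this domain, since for a quasi-central $a$ and any $b\in M$ we have $ba=ab'$ for some $b'$, giving $ab'$ as a common right multiple of $a$ and $b$, whose existence forces $a\jR b$ to exist by Lemma~\ref{L:Lcm}. With that remark the corollary follows immediately.
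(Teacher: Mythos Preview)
Your argument is correct and is precisely the unpacking the paper intends: the corollary is stated without proof immediately after Proposition~\ref{P:Quasi}, and follows exactly as you say from combining that proposition (image lies in the quasi-center) with Lemma~\ref{L:zRdivides2} (every quasi-central element is a fixed point, giving surjectivity). Your final paragraph justifying that quasi-central elements lie in the domain is a helpful gloss, though strictly speaking it is already absorbed into Lemma~\ref{L:zRdivides2} via Lemma~\ref{L:zRdivides}.
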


\begin{prop}\label{P:QuasiOfDiv}
Assume that~$M$ is a divisibility monoid with~$\Irr$ the set of its irreducible elements.
Then $\{\zR{\xx}~;~\xx\in\Irr\}$ is a generating set of~the
quasi-center~of~$M$.
\end{prop}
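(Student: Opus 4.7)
The plan is to proceed by induction on the length $\length{z}$ of a quasi-central element $z \in M$, producing a factorization of $z$ as a product of elements of the form $\zR{\xx}$ with $\xx \in \Irr$. The base case $\length{z} = 0$ (so $z = 1$) is the empty product.

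For the inductive step, assume $z \neq 1$ is quasi-central and pick an irreducible left divisor $\xx$ of $z$, which exists because $\Irr$ generates $M$. Two ingredients are then needed to split $z$ as $\zR{\xx} \cdot z'$ with $z'$ quasi-central and strictly shorter. First, applying Lemma~\ref{L:zRdivides} to $\xx$ dividing the quasi-central $z$ yields that $\zR{\xx}$ exists and divides $z$; Lemma~\ref{L:Indiff} then upgrades this to left divisibility, so left cancellativity produces a unique $z' \in M$ with $z = \zR{\xx} \cdot z'$. Second, Proposition~\ref{P:Quasi} states that $\zR{\xx}$ is quasi-central, and Lemma~\ref{L:Straight} transports quasi-centrality from $z$ to $z'$. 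Since $\xx$ itself left-divides $\zR{\xx}$, we have $\length{\zR{\xx}} \geq 1$, so length-additivity (a standard feature of divisibility monoids, already invoked in the proof of Lemma~\ref{L:zRminQuasi}) gives $\length{z'} < \length{z}$. The induction hypothesis then yields the required factorization of $z'$, and hence of $z$.

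The only delicate point is the \emph{a priori} existence of $\zR{\xx}$: a divisibility monoid need not admit local deltas for all of its irreducibles, as explicitly noted in the definition of the local delta. What rescues the argument is precisely that one starts from a quasi-central element $z$, which by Lemma~\ref{L:zRdivides} forces the local delta of every one of its irreducible left divisors to exist. Thus the set $\{\zR{\xx} : \xx \in \Irr\}$---understood as collecting only those $\xx$ for which $\zR{\xx}$ is defined---is exactly what is needed, and the surjectivity in Corollary~\ref{C:Quasi} guarantees that every element of the quasi-center is eventually reached by the inductive splitting.
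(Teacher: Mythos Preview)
Your proof is correct and follows essentially the same route as the paper: induction on~$\length{z}$, peeling off an irreducible left divisor~$\xx$, invoking Lemma~\ref{L:zRdivides} to obtain~$\zR{\xx}$ as a left divisor of~$z$, then Proposition~\ref{P:Quasi} and Lemma~\ref{L:Straight} to ensure the cofactor is again quasi-central and strictly shorter. Your extra appeal to Lemma~\ref{L:Indiff} is harmless but unnecessary (the proof of Lemma~\ref{L:zRdivides} already concludes with left divisibility), and the final reference to Corollary~\ref{C:Quasi} is superfluous---the induction itself establishes that the~$\zR{\xx}$ generate the quasi-center.
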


\begin{proof} Let~$b$ be a quasi-central element in~$M$. We show using induction
on the length~$\length{b}$ of~$b$ that there exist an integer~$n$ and
irreducible elements~$\xx_1,\ldots,\xx_n$ satisfying~$b=\zR{\xx_1}\cdots\zR{\xx_n}$.
For~$\length{b}=0$, $n$ is~0. Assume now~$\length{b}>0$. Then there exist an
irreducible element~$\xx$ and an element~$b'$ in~$M$ satisfying~$b=\xx b'$. By
Lemma~\ref{L:zRdivides}, $\zR{\xx}$ exists and we have~$b=\zR{\xx}b''$ for some~$b''$
in~$M$ with~$\length{b''}<\length{b}$. By Proposition~\ref{P:Quasi}, the
element~$\zR{\xx}$ is quasi-central, hence, by Lemma~\ref{L:Straight}, so is~$b''$. By induction hypothesis, there exist an integer~$m$ and irreducible elements~$\yy_1,\ldots,\yy_m$ admitting local
delta and satisfying~$b''=\zR{\yy_1}\cdots\zR{\yy_m}$. We
obtain~$b=\zR{\xx}\zR{\yy_1}\cdots\zR{\yy_m}$.\end{proof}

\begin{exam}\label{E:Upsilon} Let us consider the following divisibility monoids~$M_1=\langle~\xx,\yy,\zz:\xx\yy=\yy\zz,~\yy\xx=\zz\yy~\rangle$ and~$M_2=\langle~\xx,\yy,\zz:\xx\zz=\yy\xx,~\yy\zz=\zz\xx~\rangle$. In both monoids, by Lemma~\ref{L:Key}, the irreducible elements~$x$ and~$z$ clearly do not admit right local delta. On the one hand, the quasi-center of~$M_1$ is thus generated by the single element~$\zR{\yy}=\jjR\{\yy\}=\yy$. On the other hand, let us try to compute the local delta of~$\yy$. According to Remark~\ref{R:Upsilon}, 
we compute~$\Upsilon_0(\yy)=\{\yy\}$, $\Upsilon_1(\yy)=\{1,\xx,\yy,\zz\}\dR\{\yy\}=\{1,\xx,\yy,\zz\}$. Now, since $\xx\jR\zz$ does not exist, $\Upsilon_2(\yy)$ cannot exist either. Therefore, no irreducible element admits local delta~: $\{\zR{\ss}~;~\ss\in\{\xx,\yy,\zz\}\}$ is
empty and the quasi-center of~$M_2$ is then trivial.\end{exam}

\subsection{Minimality of the generating set}
We now prove that the generating set given in Proposition~\ref{P:QuasiOfDiv}
is minimal. 

\begin{lemma}\label{L:Minimality}Assume that~$M$ is a divisibility monoid. Then
any two irreducible elements~$\xx,\yy$ in~$M$ admitting local delta satisfy either~$\zR{\xx}=\zR{\yy}$ \hbox{or~$\zR{\xx}\gL\zR{\yy}=1$.}
\end{lemma}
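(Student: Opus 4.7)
My plan is to argue by contraposition: assuming $d := \zR{x} \gL \zR{y} \neq 1$, I aim to deduce $\zR{x} = \zR{y}$. The first observation is that $d$ is itself quasi-central. Indeed, $d$ left-divides the quasi-central element $\zR{x}$ (by Proposition~\ref{P:Quasi}), so Lemma~\ref{L:zRdivides} yields that $\zR{d}$ exists and left-divides $\zR{x}$; by the symmetric argument it also left-divides $\zR{y}$. Hence $\zR{d}$ is a common left divisor of $\zR{x}$ and $\zR{y}$, so $\zR{d} \leq d$. Combined with the always-valid $d \leq \zR{d}$, antisymmetry of left divisibility gives $\zR{d} = d$, and Lemma~\ref{L:zRminQuasi} makes $d$ quasi-central.

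With this in hand, the lemma reduces to showing $x \leq d$ (and, symmetrically, $y \leq d$). For once $x \leq d$ is established with $d$ quasi-central, Lemma~\ref{L:zRdivides} yields $\zR{x} \leq d$, and together with $d \leq \zR{x}$ this forces $d = \zR{x}$. The analogous argument for $y$ gives $d = \zR{y}$, hence $\zR{x} = \zR{y}$.

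The main obstacle is therefore showing $x \leq d$. Suppose not, and write $\zR{x} = d \cdot u$ with $u$ quasi-central (Lemma~\ref{L:Straight}); then $u \neq 1$. Pick an irreducible left divisor $w$ of $u$. The key step would be to establish $\zR{w} = \zR{x}$, for then Lemma~\ref{L:zRdivides} applied to the quasi-central $u$ gives $\zR{w} \leq u$, whence $\zR{x} \leq u < \zR{x}$, a contradiction. The real substance of the proof lies in this equality $\zR{w} = \zR{x}$, i.e., in showing that every irreducible left divisor of $\zR{x}$ shares the same local delta as~$x$. I would approach it by induction on $|\zR{x}|$, using the distributivity of the finite lattice $\Dar(\zR{x})$ together with the exchange identities of Lemma~\ref{L:Calculous}, most notably $c \dR (a \jR b) = (c \dR a) \jR (c \dR b)$, to transport the local-delta identity between different irreducible atoms of $\Dar(\zR{x})$ through the quasi-central scaffolding. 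The distributivity is essential: Garside monoids without it admit irreducibles with genuinely distinct local deltas sharing common multiples, so the rigidity of divisibility monoids must be invoked to rule that out here.
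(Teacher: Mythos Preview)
Your scaffolding is sound: the argument that $d=\zR{x}\gL\zR{y}$ is itself quasi-central (via $\zR{d}$ being a common left divisor, hence $\zR{d}\leq d\leq\zR{d}$) is correct and tidy, and the reduction to $x\leq d$ together with the contradiction in step~7 is valid. But you have correctly located, and then left open, the crux. Your step~6 --- that every irreducible $w$ dividing $\zR{x}$ satisfies $\zR{w}=\zR{x}$ --- is where the whole lemma lives, and you offer only a gesture toward ``induction on $|\zR{x}|$'' plus distributivity and the identity $c\dR(a\jR b)=(c\dR a)\jR(c\dR b)$. There is no candidate inductive hypothesis, no indication of how the atom structure of $\Dar(\zR{x})$ would be exploited, and that particular identity governs $\jR$ rather than the left-gcd or the passage between distinct irreducible atoms. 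As written this is a plan, not a proof.

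The paper fills exactly this gap, but by a short direct argument rather than induction, and without invoking distributivity explicitly. It first proves the sub-lemma: for irreducibles $x,y$ admitting local deltas, $\zR{x}=b\,\zR{y}$ forces $b=1$. The trick is to use Lemma~\ref{L:Indiff} to rewrite $\zR{x}=d'x$ with $x$ on the \emph{right}; from $d'x=b\,\zR{y}$ one deduces that $d'\dR b$ left-divides the irreducible~$x$, hence equals $x$ or~$1$. If $d'\dR b=x$ then $x\in M\dR b$ divides the quasi-central~$b$ (Lemma~\ref{L:zRdivides2}), so $\zR{x}$ divides~$b$ and $\zR{y}=1$, absurd. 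If $d'\dR b=1$ then $\zR{y}=(b\dR d')\,x$, so $x$ (hence $\zR{x}$, by Lemma~\ref{L:zRdivides}) divides~$\zR{y}$, and combined with $\zR{x}=b\,\zR{y}$ this forces $b=1$. Your missing step~6 follows at once: if an irreducible $w$ divides $\zR{x}$, then $\zR{w}$ divides $\zR{x}$, so $\zR{x}=b\,\zR{w}$ for some~$b$, and the sub-lemma gives $b=1$. Note in particular that the dichotomy hinges on irreducibility of~$x$ and the quasi-centrality of~$b$, not on distributivity of any lattice; your closing remark about distributivity being ``essential'' here is off the mark.
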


\begin{proof} We first prove that, for any two irreducible elements~$\xx,\yy$ admitting local delta
and  every~$b$ in~$M$, $\zR{\xx}=b\zR{\yy}$ implies~$b=1$. According to Proposition~\ref{P:Quasi},  $\zR{\xx}$ and~$\zR{\yy}$ are quasi-central, and, by Lemma~\ref{L:Straight}, $b$ is also quasi-central. Now, as~$1\dR\xx=\xx$ holds, $\xx$
divides~$\zR{\xx}$, and, by Lemma~\ref{L:Indiff}, we have~$\zR{\xx}=d\xx$ for some~$d$ in~$M$. We
find\[\zR{\xx}=d\xx=b\zR{\yy},\] and, by left cancellation, the element~$d\dR b$---which exists by
Lemma~\ref{L:Lcm}---divides~$\xx$~:
since~$\xx$ is irreducible, $d\dR b$ is either~$\xx$ or~$1$. Assume~$d\dR b=\xx$. Then, $b$ being quasi-central,
Lemma~\ref{L:zRdivides2} implies~$\zR{b}=b=\jjR(M\dR b)$, and, therefore, $\xx$ divides~$b$. By
Lemma~\ref{L:zRdivides}, $\zR{\xx}$ divides~$b$, which, by cancellativity and conicity,
implies~$\zR{\yy}=1$, a contradiction. We deduce~$d\dR b=1$. We find then~$\zR{\yy}=(b\dR d)\xx$, and, by Lemmas~\ref{L:Indiff} and~\ref{L:zRdivides}, $\zR{\xx}$ divides~$\zR{\yy}$, which, by cancellativity and conicity,
implies~$b=1$. 

Finally, let~$\xx,\yy$ be irreducible elements in~$M$ admitting local delta.
Assume~$\zR{\xx}\gL\zR{\yy}\not=1$. Then there exists an irreducible element~$\zz$ in~$M$ dividing
both~$\zR{\xx}$ and~$\zR{\yy}$. By Lemma~\ref{L:zRdivides}, $\zR{\zz}$ divides both~$\zR{\xx}$
and~$\zR{\yy}$, which, by the result above, implies~$\zR{\xx}=\zR{\zz}=\zR{\yy}$.\end{proof}

\begin{prop}\label{P:Minimal} Assume that~$M$ is a divisibility monoid with~$\Irr$ the set of its
irreducible elements. Then $\{\zR{\xx}~;~\xx\in\Irr\}$ is a minimal generating set of~the
quasi-center~of~$M$.
\end{prop}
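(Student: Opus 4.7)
The plan is to derive the minimality assertion as a direct corollary of Lemma~\ref{L:Minimality}. I would argue by contradiction: suppose that for some $\xx \in \Irr$ admitting a local delta, the element $\zR{\xx}$ lies in the submonoid generated by $\{\zR{\yy} : \yy \in \Irr\}\setminus\{\zR{\xx}\}$. Then there exist $n$ and irreducible elements $\yy_1, \ldots, \yy_n$ each admitting a local delta, with $\zR{\yy_i} \neq \zR{\xx}$ for all~$i$, such that $\zR{\xx} = \zR{\yy_1}\cdots\zR{\yy_n}$. The case $n=0$ would force $\zR{\xx}=1$; but since $1\dR\xx = \xx$, the irreducible element $\xx$ is a left divisor of $\zR{\xx}$, which contradicts $\xx \neq 1$. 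So $n \geq 1$.

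Next I would exploit the factorization to produce a contradiction via Lemma~\ref{L:Minimality}. From $\zR{\xx} = \zR{\yy_1}\cdot(\zR{\yy_2}\cdots\zR{\yy_n})$ we read off that $\zR{\yy_1}$ is a left divisor of $\zR{\xx}$, hence $\zR{\yy_1} \gL \zR{\xx} = \zR{\yy_1}$. Now Lemma~\ref{L:Minimality}, applied to the pair $\xx, \yy_1$ (both irreducible and admitting local delta), yields either $\zR{\yy_1} = \zR{\xx}$ or $\zR{\yy_1} \gL \zR{\xx} = 1$. The first alternative is excluded by the choice of factorization. The second alternative combined with the left-divisibility forces $\zR{\yy_1} = 1$; but then $1 \dR \yy_1 = \yy_1$ gives $\yy_1 \leq \zR{\yy_1} = 1$, so $\yy_1 = 1$, contradicting the irreducibility of $\yy_1$.

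I do not anticipate any real obstacle here: Lemma~\ref{L:Minimality} has already carried the genuine content (the dichotomy between equality and left-coprimality of local deltas of irreducibles), and the proof above is essentially a bookkeeping argument that turns that dichotomy into non-redundancy of each generator. The only two facts that need to be kept in mind are that the leftmost factor in any product representation of $\zR{\xx}$ is automatically a left divisor of $\zR{\xx}$, and that every $\zR{\yy}$ is bounded below by $\yy$ via the identity $1\dR\yy = \yy$; both are immediate from the definition of local delta.
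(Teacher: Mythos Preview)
Your argument is correct and follows essentially the same route as the paper: both proofs reduce minimality to Lemma~\ref{L:Minimality} by taking the first factor in a putative factorization of~$\zR{\xx}$ and forcing a contradiction. The only cosmetic difference is that the paper phrases the conclusion slightly more strongly---it shows that each~$\zR{\xx}$ is irreducible within the quasi-center, i.e.\ $\zR{\xx}=ab$ with~$a,b$ arbitrary quasi-central elements forces~$a=1$ or~$b=1$ (using Lemma~\ref{L:zRdivides} to extract a~$\zR{\yy}$ from the first factor~$a$)---whereas you assume from the outset that the factors are themselves generators~$\zR{\yy_i}$; but for the stated proposition your version is entirely sufficient, and the key step (invoking Lemma~\ref{L:Minimality} on~$\xx$ and the first irreducible~$\yy_1$) is identical.
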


\begin{proof} By Proposition~\ref{P:QuasiOfDiv}, the set~$\{\zR{\xx}~;~\xx\in
\Irr\}$ generates the quasi-center of~$M$. Let~$\xx$ be an irreducible element such
that~$\zR{\xx}$ exists, and~$a,b$ be quasi-central elements in~$M$. It suffices to show
that~$\zR{\xx}=ab$ implies either~$a=1$ or~$b=1$. Assume~$a\not=1$. Then we
have~$a=\yy a'$ for some irreducible element~$\yy$ and some~$a'$ in~$M$. As~$a$ is
quasi-central, by Lemma~\ref{L:zRdivides}, $\zR{\yy}$ exists and is a left divisor
of~$a$, and, therefore,
$\zR{\yy}$ is a left divisor of~$\zR{\xx}$. We have~$\zR\yy\not=1$, 
hence, by Lemma~\ref{L:Minimality}, $\zR\yy=\zR{\xx}$. Cancellativity and
conicity imply then~$b=1$.\end{proof}

\subsection{A free Abelian submonoid}
We conclude with the observation that the quasi-center of every
divisibility monoid is a free Abelian submonoid (that is, a monoid isomorphic to a direct product of copies of the monoid~$\mathbb{N}$).

\begin{lemma}\label{L:zdOfLcms} Assume that~$M$ is a divisibility monoid.
Let~$a,b$ be elements~in~$M$ admitting local delta. Then $a\jR b$ exists and admits a
local delta, namely~$\zR{a}\jR\zR{b}$.
\end{lemma}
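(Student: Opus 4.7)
The plan is to check three things in order: that $a\jR b$ exists, that it admits a local delta, and that this local delta coincides with $\zR{a}\jR\zR{b}$.

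Existence of $a\jR b$ is immediate from Lemma~\ref{L:Key} applied to either $a$ or $b$. For the same reason, $\zR{a}$ itself admits a local delta (by the proof of Proposition~\ref{P:Quasi}, $\zR{\zR{a}}=\zR{a}$), so Lemma~\ref{L:Key} applied to $\zR{a}$ gives the existence of $\zR{a}\jR c$ for every $c\in M$, and in particular of $\zR{a}\jR\zR{b}$. To see that $a\jR b$ admits a local delta, I would use Lemma~\ref{L:Key} once more: for any $c\in M$, the element $a\jR c$ exists (since $a$ admits a local delta) and then $b\jR(a\jR c)$ exists (since $b$ does), giving a common right multiple of $a\jR b$ and $c$; by Lemma~\ref{L:Lcm} this forces $(a\jR b)\jR c$ to exist. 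Thus $\zR{a\jR b}$ exists.

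To identify $\zR{a\jR b}$ with $\zR{a}\jR\zR{b}$, I prove divisibility in both directions and invoke antisymmetry (a consequence of cancellativity and conicity). For one direction, Proposition~\ref{P:Quasi} says $\zR{a\jR b}$ is quasi-central, and $a\jR b$ is a left divisor of it (since $1\dR(a\jR b)=a\jR b$); hence both $a$ and $b$ divide $\zR{a\jR b}$, and Lemma~\ref{L:zRdivides} yields that $\zR{a}$ and $\zR{b}$ divide $\zR{a\jR b}$, so $\zR{a}\jR\zR{b}$ divides $\zR{a\jR b}$. For the converse direction, I rely on the last identity of Lemma~\ref{L:Calculous}, namely
\[c\dR(a\jR b)=(c\dR a)\jR(c\dR b)\qquad\text{for every }c\in M,\]
so that
\[\zR{a\jR b}=\jjR\{(c\dR a)\jR(c\dR b)\,;\,c\in M\}.\]
Since each $c\dR a$ divides $\zR{a}$ and each $c\dR b$ divides $\zR{b}$, the element $(c\dR a)\jR(c\dR b)$ is a left divisor of $\zR{a}\jR\zR{b}$, and hence so is the lcm of all of them, i.e.\ $\zR{a\jR b}$ divides $\zR{a}\jR\zR{b}$.

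The main obstacle is bookkeeping of existence: every application of Lemma~\ref{L:Calculous} is conditional on both sides being defined, so I must justify each $\jR$ and $\dR$ I write down. The key enabler is Lemma~\ref{L:Key}, which converts the hypothesis \emph{``$a$ and $b$ admit local deltas''} into the strong universal existence statement that $a\jR c$ and $b\jR c$ exist for every~$c$; once that is in hand, all residuals appearing in the argument are automatically defined, and the lcm-existence requirements for the second part of Lemma~\ref{L:Calculous} are met.
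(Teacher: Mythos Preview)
Your proof is correct and follows essentially the same route as the paper's: existence of $a\jR b$ and of $\zR{a\jR b}$ via Lemma~\ref{L:Key}, and the identification via the identity $c\dR(a\jR b)=(c\dR a)\jR(c\dR b)$ from Lemma~\ref{L:Calculous}. The only difference is that for the direction $\zR{a}\jR\zR{b}\mid\zR{a\jR b}$ you detour through quasi-centrality (Proposition~\ref{P:Quasi} and Lemma~\ref{L:zRdivides}), whereas the paper extracts both directions from the same identity: since each $c\dR a$ left-divides $(c\dR a)\jR(c\dR b)=c\dR(a\jR b)$, which in turn left-divides $\zR{a\jR b}$, one gets $\zR{a}\mid\zR{a\jR b}$ directly, and similarly for~$\zR{b}$.
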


\begin{proof} Let~$a,b$ be elements~in~$M$ admitting local delta. First, by Lemma~\ref{L:Key}, $a\jR b$ exists. Next, by Lemma~\ref{L:Key} again, the element~$b\jR c$ exists for every element~$c$ in~$M$, so does~$a\jR (b\jR c)$. Associativity of the operation~$\jR$ implies that~$(a\jR b)\jR c$ exists for every element~$c$ in~$M$. Therefore, by Lemma~\ref{L:Key}, $\zR{a\jR b}$ exists. Now, by Lemma~\ref{L:Calculous}, $(c\dR a)\jR(c\dR b)=c\dR(a\jR b)$ holds for every~$c$ in~$M$. We obtain then~$\zR{a}\jR\zR{b}=\zR{a\jR b}$.\end{proof}

\begin{prop}\label{P:FreeAbelian}
Assume that~$M$ is a divisibility monoid. Let~$\QZ$ be its
quasi-center. Then~$\QZ$ is a free Abelian submonoid of~$M$, and the
partial function~$a\mapsto\zR a$ is a partial surjective
semilattice homomorphism
from~$(M,\jR)$ onto~$(\QZ,\jR)$.
\end{prop}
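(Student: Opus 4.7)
The plan is to use Propositions~\ref{P:QuasiOfDiv} and~\ref{P:Minimal} to fix a minimal generating family $g_1,\ldots,g_k$ of~$\QZ$ (the distinct local deltas of irreducible elements), to check that these generators commute pairwise, to establish unique factorisation of quasi-central elements in terms of them, and finally to read off the semilattice-homomorphism statement from Lemma~\ref{L:zdOfLcms} and Corollary~\ref{C:Quasi}.

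For commutativity of two distinct generators $g_i,g_j$: Lemma~\ref{L:Minimality} gives $g_i\gL g_j=1$, and Lemma~\ref{L:zdOfLcms} (applied to $g_i,g_j$, which are their own local deltas by Lemma~\ref{L:zRdivides2}) furnishes the right lcm~$g_i\jR g_j$. The product $g_ig_j$ is quasi-central, so by Lemma~\ref{L:Indiff} both $g_i$ and $g_j$ left-divide it; hence $g_i\jR g_j$ is a left divisor of~$g_ig_j$. Combining length additivity on products with the modular identity in the finite distributive lattice~$\Dar(g_i\jR g_j)$ and the equality $g_i\gL g_j=1$, one gets $\length{g_i\jR g_j}=\length{g_i}+\length{g_j}=\length{g_ig_j}$, which forces $g_ig_j=g_i\jR g_j$. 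The symmetric argument gives $g_jg_i=g_i\jR g_j$ too.

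For the unique factorisation, commutativity plus Proposition~\ref{P:QuasiOfDiv} gives every $b\in\QZ$ an expression $b=g_1^{n_1}\cdots g_k^{n_k}$, and uniqueness reduces to the following claim, proved by induction on~$r$: \emph{if a generator $g_i$ left-divides a quasi-central element $g_{j_1}g_{j_2}\cdots g_{j_r}$, then $g_i=g_{j_s}$ for some~$s$.} For $r=1$, a factorisation $g_{j_1}=g_ic$ forces $c$ to be quasi-central by Lemma~\ref{L:Straight}, and Proposition~\ref{P:Minimal} yields $c=1$ and $g_i=g_{j_1}$. For $r\geq 2$, either $g_i=g_{j_1}$ and we are done, or the commutativity step gives $g_i\jR g_{j_1}=g_{j_1}g_i$, which left-divides the whole product, and left cancellation passes $g_i$ to a left divisor of $g_{j_2}\cdots g_{j_r}$ so the induction applies. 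Granted the claim, any hypothetical $g_i^{n_i+1}$ left-dividing~$b$ would, after $n_i$ cancellations, make $g_i$ coincide with some $g_j$ with $j\neq i$, a contradiction: each $n_i$ is thus intrinsically determined as the maximal power of $g_i$ left-dividing~$b$, and $\QZ\cong\mathbb{N}^k$.

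The last assertion is then immediate: Lemma~\ref{L:zdOfLcms} states exactly that $\zR{a\jR b}=\zR{a}\jR\zR{b}$ whenever both local deltas exist, and Corollary~\ref{C:Quasi} provides the surjection onto~$\QZ$. The principal obstacle is the inductive claim in the freeness step, which rules out a generator being ``hidden'' inside a product of pairwise distinct generators: it is here that both the commutativity established above and the minimality statement of Proposition~\ref{P:Minimal} are decisively used.
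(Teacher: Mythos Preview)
Your proof is correct and reaches the same conclusion as the paper, but the execution differs in two places. For commutativity of two distinct generators, the paper argues directly that $\zR\xx\dR\zR\yy=\zR\yy$: it observes that $\zR\xx\dR\zR\yy$ is a nontrivial quasi-central left divisor of~$\zR\yy$ (nontriviality from Lemma~\ref{L:Minimality}, quasi-centrality via Lemma~\ref{L:zdOfLcms} and Lemma~\ref{L:Straight}, and the divisibility from the very definition of~$\zR\yy$ as $\jjR(M\dR\yy)$), and then invokes the irreducibility of~$\zR\yy$ in~$\QZ$ established in the proof of Proposition~\ref{P:Minimal}. Your route via the modular rank identity in the distributive lattice~$\Dar(g_i\jR g_j)$ together with length additivity is equally valid and arguably more transparent, since it bypasses the check that $\zR\xx\dR\zR\yy$ left-divides~$\zR\yy$. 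For freeness, the paper simply asserts that the identity $\zR\xx\dR\zR\yy=\zR\yy$ ``suffices'' and leaves the unique-factorisation argument implicit; your inductive claim (a generator left-dividing a product of generators must coincide with one of them) makes this step explicit and is a genuine addition of detail. Both approaches ultimately rest on Lemma~\ref{L:Minimality} and Proposition~\ref{P:Minimal}, and the semilattice-homomorphism clause is handled identically via Lemma~\ref{L:zdOfLcms} and Corollary~\ref{C:Quasi}.
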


\begin{proof} Let~$\Irr$ be the set of the irreducible elements in~$M$. By
Proposition~\ref{P:Minimal}, $\{\zR\xx~;~\xx\in \Irr\}$ is the minimal generating
set of~$\QZ$. So, in order to prove that~$\QZ$ is free
Abelian, it suffices to show that, for
any two~$\xx,\yy$ in~$\Irr$ admitting local delta with~$\zR\xx\!\not=\!\zR\yy$, the
element~$\zR\xx\dR\zR\yy$---which exists by Lemma~\ref{L:zdOfLcms}---is~$\zR\yy$.
Let~$\xx,\yy$ be irreducible elements admitting local delta
with~$\zR\xx\!\not=\!\zR\yy$. Then, by Lemma~\ref{L:Minimality}, we
have~\hbox{$\zR\xx\dR\zR\yy\not=1$.} Now, $\zR\xx\dR\zR\yy$ divides~$\zR\yy$, and, by
Lemma~\ref{L:zdOfLcms}, the element~$\zR\xx\dR\zR\yy$ is quasi-central, which
implies~$\zR\xx\dR\zR\yy=\zR\yy$ by Proposition~\ref{P:Minimal}. The second part of
the assertion follows then from~Lemma~\ref{L:zdOfLcms}.\end{proof}

\noindent Both Propositions~\ref{P:Minimal} and~\ref{P:FreeAbelian} hold for every Garside monoid. The
corresponding proofs however are different, even though the current approach closely follows~\cite{pib}. The point is
that, in the Garside case, the function~$a\mapsto\zR{a}$ is total and coincide with its left
counterpart~$a\mapsto\zL{a}$, but there need not exist an additive length (see for instance
the monoid~$\Knuth$ in Example~\ref{E:Garside}), while, in the divisibility case, there exists an additive length but the
function~$a\mapsto\zR{a}$ is only partial and does not admit a left counterpart in general.

\section{The intersection}

\noindent We can finally state~:

\begin{thm}\label{T:intersection}(i) A divisibility
monoid is a Garside monoid if and only if every pair of its irreducible elements admits
common multiples.

\noindent (ii) A Garside monoid is a divisibility monoid if and only if the lattice of
its simple elements is a hypercube.
\end{thm}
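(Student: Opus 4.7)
The plan is to establish each part as a pair of forward and backward implications, leveraging the local delta machinery of Section~2 for the nontrivial directions.

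For part~(i), the forward implication is immediate from Definition~\ref{D:GarsideMonoid}, since a Garside monoid admits right lcms of all pairs. For the converse, given a divisibility monoid~$M$ in which every pair of irreducibles admits a common right multiple, I would first extend this to arbitrary pairs by a double induction on lengths, using the formula for~$c\dR(ab)$ in Lemma~\ref{L:Calculous} together with the fact that~$\length{b\dR\xx}\leq 1$ for any irreducible~$\xx$. Combined with Lemma~\ref{L:Lcm}, this yields right lcms throughout~$M$, and Lemma~\ref{L:Key} then guarantees that the local delta~$\zR{a}$ is defined for every~$a\in M$. Setting~$\Delta=\jjR\{\zR{\xx}~;~\xx\in\Irr\}$ (existing by Lemma~\ref{L:Lcm} applied to this finite set), I would verify that~$\Delta$ is a Garside element: quasi-centrality comes from Lemma~\ref{L:zdOfLcms} and Proposition~\ref{P:Quasi}, the coincidence of left and right divisors from Lemma~\ref{L:Indiff}, finiteness of~$\Dar(\Delta)$ from the definition of a divisibility monoid, and the generation of~$M$ from the inclusion~$\Irr\subseteq\Dar(\Delta)$. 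The remaining Garside axiom---existence of left lcms---will follow from the observation that any two elements left-divide some power~$\Delta^k$, whose quasi-centrality makes the finite lattice~$\Dar(\Delta^k)$ serve both divisibility orders.

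For part~(ii), the forward implication exploits the additivity of the length function in a divisibility monoid: the inequality~$\length{b\dR\xx}\leq 1$ forces~$b\dR\xx$ to be either~$1$ or an irreducible, so each~$\zR{\xx}$ is the right lcm of a subset~$S_\xx\subseteq\Irr$ containing~$\xx$. Consequently, the minimal Garside element~$\Delta=\jjR\{\zR{\xx}~;~\xx\in\Irr\}$ collapses to~$\jjR\Irr$ and has length~$n=|\Irr|$, since the pairwise~$\gL$-coprime distinct irreducibles yield~$\length{\jjR\Irr}=n$ by additivity. The lattice~$\Dar(\Delta)$ is then a finite distributive lattice of height~$n$ with exactly~$n$ atoms (the irreducibles); by Birkhoff's theorem the height equals the number of join-irreducibles, forcing these~$n$ join-irreducibles to coincide with the~$n$ atoms. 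Hence the poset of join-irreducibles is an antichain and~$\Dar(\Delta)$ is Boolean, i.e., a hypercube. For the converse, if~$M$ is a Garside monoid with~$\Dar(\Delta)\cong\{0,1\}^n$, I would verify the four criteria of Theorem~\ref{T:Kuske}: condition~(iv) stems from the fact that all covers in the hypercube are encoded by length-2 lcms of pairs of atoms, so the presentation of~$M$ is generated by length-2 relations; conditions~(ii) and~(iii) follow from cancellativity of~$M$ combined with uniqueness of factorizations in a Boolean lattice (for~(iii), the left gcd~$\xx\yy\gL\xx\zz=\xx(\yy\gL\zz)=\xx$ forces~$\xx'=\xx$); and condition~(i) is obtained by propagating the Boolean structure of~$\Dar(\Delta)$ to each~$\Dar(\Delta^k)$ via quasi-centrality, then to~$\Dar(a)$ for every~$a\in M$ viewed as an interval in some~$\Dar(\Delta^k)$.

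The principal obstacle is precisely this last step: showing that~$\Dar(a)$ is distributive for every~$a\in M$, rather than only for divisors of the minimal Garside element. Lifting the hypercube structure of the simples to all powers~$\Delta^k$ requires carefully unwinding how the length-2 relation structure of a hypercube Garside monoid constrains lcms of arbitrary products, and verifying that Kuske's axioms~(ii)--(iii) provide exactly the coherence needed for this propagation. Everything else in the proof---the forward implications, and the identification of~$\Delta$ with~$\jjR\Irr$---reduces to counting arguments based on additivity of length and Birkhoff's representation theorem.
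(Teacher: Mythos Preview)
Your treatment of part~(i) and of the forward implication in part~(ii) matches the paper's argument essentially verbatim: extend common multiples from irreducibles to all pairs by induction, invoke Lemma~\ref{L:Key} to get local deltas everywhere, and exhibit~$\jjR_{\xx\in\Irr}\zR{\xx}$ as a Garside element; then for the forward direction of~(ii) observe that each~$\zR{\xx}$ is a join of irreducibles, so~$\Delta=\jjR\Irr$ and the distributive lattice~$\Dar(\Delta)$ is forced to be Boolean.

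The converse of~(ii) is where you and the paper diverge. The paper does \emph{not} go through Kuske's axioms. Instead it checks Definition~\ref{D:DiviMonoid} directly: it cites an external result (\cite[Proposition~3.12]{pia}) asserting that in a Garside monoid every divisor of~$\Delta^k$ decomposes uniquely as a join of~$k$ join-indecomposable elements, from which~$\Dar(\Delta^k)\cong{\bf k}^{|\Irr|}$ follows at once. Since any~$\Dar(a)$ embeds as a sublattice of some~$\Dar(\Delta^k)$, distributivity propagates and the definition is verified. Your ``principal obstacle'' is thus dispatched in the paper by an off-the-shelf structural lemma rather than by unwinding the length-2 relation structure by hand.

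Your route via Theorem~\ref{T:Kuske} is in principle workable but heavier, and two steps are under-justified. First, your argument for condition~(iv)---that the hypercube lattice forces a presentation by length-2 relations---tacitly uses that a Garside monoid is presented by its lattice of simples, which is true but not stated in the paper and needs its own proof. Second, you over-verify condition~(i): Kuske only requires~$\Dar(xyz)$ to be distributive for \emph{irreducibles}~$x,y,z$, not for arbitrary~$a$; so you need only~$\Dar(\Delta^3)$, not all~$\Dar(\Delta^k)$---though even that still requires something like the cited decomposition result. The paper's direct approach is shorter precisely because it bypasses the presentation-theoretic conditions~(ii)--(iv) of Kuske entirely.
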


\begin{proof}(i) The condition is necessary by
very definition of a Garside monoid. We have to show that it is also sufficient.
Let~$M$ be a divisibility monoid with~$\Irr$ the finite set of its irreducible elements.
Assume that every pair of irreducible elements in~$M$ admits right multiples. Then, by
Lemma~\ref{L:Lcm}, every pair of irreducible elements in~$M$ admits a unique right lcm, and,
by Theorem~\ref{T:Kuske}~({\it iv}) or simply by distributivity, such an lcm is of length~2. From this, a straightforward
induction shows that any two elements~$a,b$ in~$M$ admit a unique right
lcm. Therefore, by Lemma~\ref{L:Key}, $\zR a$
exists for every~$a$ in~$M$. In particular, $\zR\xx$ exists for every
irreducible element~$\xx$ of~$M$. Finally, by
Proposition~\ref{P:FreeAbelian}, $\jjR_{x\in\Irr}\zR{x}$ is a minimal Garside element
for~$M$, thus~$M$ is a Garside monoid.

(ii) Let~$M$ be a Garside monoid with~$\Irr$ the finite set of its irreducible elements
and~$\D$ its minimal Garside element. Assume first that~$M$ is divisibility monoid.
Then the finite lattice of its simple elements~$\Dar(\D)$ is distributive. By
Proposition~\ref{P:FreeAbelian}, we have~$\D=\jjR_{x\in\Irr}\zR{x}$. Now, by
Theorem~\ref{T:Kuske}~({\it iv}), for every~$\xx$ in~$\Irr$, $M\dR\xx$ is a subset
of~$\Irr_1=\Irr\cup\{1\}$ including~$\xx$, say~$M\dR\xx=\Irr_{\xx}\cup\{\xx\}$, so we
have~$\zR{x}=\jjR_{y\in\Irr_x\cup\{\xx\}\subset\Irr_1}y$,
hence~$\D=\jjR_{x\in\Irr}\zR{x}=\jjR_{x\in\Irr}x$. Therefore, the lattice~$\Dar(\D)$
is a finite distributive lattice, whose upper bound is the join of its atoms,
so~$\Dar(\D)$ is a hypercube (see~\cite{brk} or for instance~\cite[page 107]{sta}).

Conversely, assume that the lattice~$\Dar(\D)$ is a hypercube, that is, $\Dar(\D)$ is isomorphic to~${\bf
1}^{|\Irr|}$ with~${\bf 1}$ the rank~$1$ chain. According to~\cite[Proposition 3.12]{pia}, for every positive integer~$k$, every element~$a$ dividing~$\D^k$ admits a unique decomposition as~$a=b_1\jR\ldots\jR b_k$ with~$b_i$ $\jR$-indecomposable for~$1\leq i\leq k$, that is, $b_i=b'_i\jR b''_i$ implying either~$b_i=b'_i$ or~$b_i=b''_i$. Then, for every positive integer~$k$,
$\Dar(\D^k)$ is isomorphic to~${\bf k}^{|\Irr|}$ with~${\mathbf k}$ the rank~$k$ chain, which
is a distributive lattice. Now, for every element~$a$ in~$M$, the lattice~$\Dar(a)$ is a sublattice of a lattice~$\Dar(\D^k)$ for some positive integer~$k$. Every sublattice of a distributive lattice being distributive, the lattice~$\Dar(a)$ is thus distributive for every element~$a$ in~$M$.
Therefore, all the requirements in the definition of a divisibility monoid are gathered.\end{proof}

\def\NN{\hbox{$\mathbb N$}}
\def\KB{\hbox{$\mathbb K$}}

\begin{exam}\label{E:RankThree} Up to isomorphism, there are 2 (\resp 5, 23) Garside divisibility
monoids with rank~2 (\resp 3, 4). Those with rank~2
are~$\NN^2=\langle~\xx,\yy:\xx\yy=\yy\xx~\rangle$
and~$\KB=\langle~\xx,\yy:\xx^2=\yy^2~\rangle$\footnote[7]{$\KB$ is for Klein Bottle.}.
Figure~\ref{F:Rank3} (to be compared with Figure~\ref{F:CK}) shows the lattice of simple elements of each of the~5 Garside
divisibility monoids with rank~3. Note that, by using Theorem~\ref{T:Pib}, we can recognize (from the left to the right) the monoids~$\NN^3$, $\NN\bowtie\NN^2$, $\NN\times\KB$, $\NN\bowtie\KB$ and some indecomposable (as a crossed product) monoid. By using~Proposition~\ref{P:FreeAbelian}, we compute that their quasi-center is isomorphic to~$\NN^3$, $\NN^2$, $\NN^2$, $\NN^2$ and~$\NN$,
respectively.
\end{exam}

\begin{figure}[htb]
\begin{center}
\includegraphics{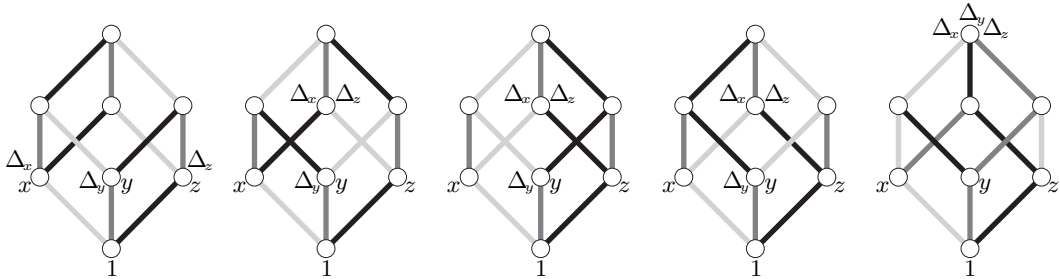}
	\put(-355.5,-4){\makebox(0,0){$1$}}
	\put(-274.3,-4){\makebox(0,0){$1$}}
	\put(-193.3,-4){\makebox(0,0){$1$}}
	\put(-111.9,-4){\makebox(0,0){$1$}}
	\put(-31,-4){\makebox(0,0){$1$}}
	\put(-388.5,27){\makebox(0,0){$\xx$}}
	\put(-307.3,27){\makebox(0,0){$\xx$}}
	\put(-226.3,27){\makebox(0,0){$\xx$}}
	\put(-144.9,27){\makebox(0,0){$\xx$}}
	\put(-64,27){\makebox(0,0){$\xx$}}
	\put(-350,27){\makebox(0,0){$\yy$}}
	\put(-268.8,27){\makebox(0,0){$\yy$}}
	\put(-187.8,27){\makebox(0,0){$\yy$}}
	\put(-106.4,27){\makebox(0,0){$\yy$}}
	\put(-25.5,27){\makebox(0,0){$\yy$}}
	\put(-324,27){\makebox(0,0){$\zz$}}
	\put(-242.8,27){\makebox(0,0){$\zz$}}
	\put(-161.8,27){\makebox(0,0){$\zz$}}
	\put(-80.4,27){\makebox(0,0){$\zz$}}
	\put(0.5,27){\makebox(0,0){$\zz$}}
	\put(-390.5,37){\makebox(0,0){$\zR{\scriptscriptstyle\!\xx}$}}
	\put(-283,62){\makebox(0,0){$\zR{\scriptscriptstyle\!\xx}$}}
	\put(-202,62){\makebox(0,0){$\zR{\scriptscriptstyle\!\xx}$}}
	\put(-120.6,62){\makebox(0,0){$\zR{\scriptscriptstyle\!\xx}$}}
	\put(-40,87){\makebox(0,0){$\zR{\scriptscriptstyle\!\xx}$}}
	\put(-362.9,28.5){\makebox(0,0){$\zR{\scriptscriptstyle\!\yy}$}}
	\put(-281.5,28.5){\makebox(0,0){$\zR{\scriptscriptstyle\!\yy}$}}
	\put(-200.7,28.5){\makebox(0,0){$\zR{\scriptscriptstyle\!\yy}$}}
	\put(-119.3,28.5){\makebox(0,0){$\zR{\scriptscriptstyle\!\yy}$}}
	\put(-30,92){\makebox(0,0){$\zR{\scriptscriptstyle\!\yy}$}}
	\put(-322,37){\makebox(0,0){$\zR{\scriptscriptstyle\!\zz}$}}
	\put(-266,62){\makebox(0,0){$\zR{\scriptscriptstyle\!\zz}$}}
	\put(-184.9,62){\makebox(0,0){$\zR{\scriptscriptstyle\!\zz}$}}
	\put(-103.6,62){\makebox(0,0){$\zR{\scriptscriptstyle\!\zz}$}}
	\put(-21,87){\makebox(0,0){$\zR{\scriptscriptstyle\!\zz}$}}
\end{center}
\caption{The lattices of simple elements of the rank~3 Garside divisibility monoids.}
\label{F:Rank3}
\end{figure}

\noindent In Remark~\ref{R:Upsilon} and Example~\ref{E:Upsilon}, we have seen how local delta in a divisibility monoid are effectively computable. For instance, let us compute the quasi-center of the fifth rank 3 Garside divisibility monoid, namely $M_{3,5}=\langle~\xx,\yy,\zz:\xx^2=\yy\zz,\yy^2=\zz\xx,\zz^2=\xx\yy~\rangle$. We have $\Upsilon_1(\xx)=\{1,\xx,\yy,\zz\}\dR\{\xx\}=\{1,\xx,\zz\}$, $\Upsilon_2(\xx)=\{1,\xx,\yy,\zz\}\dR\{1,\xx,\zz\}=\{1,\xx,\yy,\zz\}=\Irr\cup\{1\}=\Upsilon_3(\xx)=M\dR\xx$. Then~$\xx$ admits a local delta, namely~$\Delta_\xx=\jjR\{1,\xx,\yy,\zz\}=\xx^3$. Symmetrically, we have~$\Delta_\yy=\Delta_\zz=\xx^3$. Therefore, the quasi-center of~$M_{3,5}$ is quite isomorphic to~$\NN$.

\label{lastpage}

\end{document}